\newtheorem{theorem}{Theorem}[section]
\newtheorem{lemma}[theorem]{Lemma}
\newtheorem{proposition}[theorem]{Proposition}
\newtheorem{remark}[theorem]{Remark}
\begin{document}
\begin{frontmatter}
\title{On the Long Time Existence of a Fractional KdV-BBM Type Equation}
\author{Goksu Oruc \corref{cor1}}
\ead{goeksu.oruc@uni-ulm.de}

\address{University of Ulm, Institute of Numerical Mathematics, Ulm,
	     Germany.}
 
\begin{abstract}
We consider a fractional Korteweg de Vries-Benjamin Bona Mahony (KdV-BBM) type equation including both fractional dispersive terms of  fractional KdV and fractional BBM equations. We aim to enhance the existence time of solutions with small initial data $|| u_0||_{H^{N+\alpha/2}}= \epsilon$  from $\frac{1}{\epsilon}$ to  $\frac{1}{\epsilon^2}$. The proof relies on the combination of a modified energy method with Fourier techniques.  In addition, the long time existence issues are investigated numerically.  Numerical observations of the lifespan give an evidence of existence of solutions beyond the hyperbolic time scale. This study provides a detailed analysis from both analytical and numerical aspects for the existence of smooth solutions.
\end{abstract}

\begin{keyword}
Fractional KdV-BBM equation, Extended existence time, Modified energy method, Solitary waves, Blow up.\\
 \MSC[2020]{35A01, 35B44, 35S10, 35Q51, 65M70.} 
\end{keyword}

\end{frontmatter}
\renewcommand{\theequation}{\arabic{section}.\arabic{equation}}
\setcounter{equation}{0}
\section{Introduction}
In this work, we consider the Cauchy problem for a fractional KdV-BBM type equation
\begin{eqnarray} 
&& u_t+ \kappa u_x + \lambda uu_x  - \mu D^{\alpha} u_x + \nu D^{\alpha} u_t  = 0, \label{fKdVBBM} \\
&& u(0,x)=u_0(x) \label{u0},
\end{eqnarray} 
for $u(t,x): [0,T] \times \mathbb{R} \rightarrow \mathbb{R}$, where $\kappa, \mu \in \mathbb{R}$, $\lambda, \nu \in \mathbb{R}^{+}$ and $D^{\alpha} =(- \Delta)^{\frac{\alpha}{2}}$ denotes the pseudo-differential operator defined by using the Fourier transform $\mathcal{F}$ 
 $$\mathcal{F}({D^{\alpha}q})(k)=|k|^{\alpha}\hat{q}(k),$$
with $0 < \alpha < 1$. We assume that the initial data $u_0$ in \eqref{u0} is sufficiently small
$$ \|u_0 \|_{H^{N+\alpha/2}(\mathbb{R})} \leq \epsilon \ll 1.$$
Here, $H^{s}(\mathbb{R})$ is the standard Sobolev space.

The special cases of the fractional KdV-BBM type equation contain a large class of equations. When $\alpha=2$ the equation  \eqref{fKdVBBM} turns into the standard KdV-BBM equation, which was proposed in \cite{bona,fetecau} as a model equation for unidirectional long wave propagation in dispersive media. The global wellposedness of the solutions to the Cauchy problem was established in \cite{mancoz} for $u_0 \in H^s(\mathbb{R})$ with $s\geq 1$ and $\nu>0$. In addition, the qualitative behaviour of the solutions such as dispersive shock formation, solitary waves and their interactions were studied numerically in \cite{mitsotakis,kalisch2013,FRANCIUS,kamgahigh}. The solitonic gas behaviour was investigated numerically in \cite{dutykh}. The version of the equation \eqref{fKdVBBM} with $\mu=-\frac{3}{4}$ and $\nu=\frac{5}{4}$ was derived to model unidirectional small amplitude long wave propagation in elastic medium in \cite{hae,hae2}. Then, the local wellposedness of solutions was obtained in $ H^s(\mathbb{R})$ with $s\geq 2- \frac{\alpha}{2}$, when $\alpha<1$ in \cite{oruc}.  For $1\leq \alpha \leq 2$,  it was proved in \cite{fabio} that the Cauchy problem with initial data $u_0 \in H^{\alpha/2}(\mathbb{R})$ is globally wellposed. If we choose $\alpha=2$, equation \eqref{fKdVBBM} corresponds to standard KdV equation for $\nu=0$ and to standard BBM equation for $\mu=0$. Moreover, it is known as Benjamin-Ono (BO) equation when $\alpha=1$. These equations have globally wellposed solutions in appropriate Sobolev spaces $ H^s(\mathbb{R})$ \cite{saut,colliander,bona,bona2009,ponce,tao}. Besides the standard unidirectional model mentioned above, one can also obtain the fractional KdV equation by taking $\nu=0$ in the equation \eqref{fKdVBBM} and the fractional BBM equation for $\mu=0$. The fractional KdV equation was shown in \cite{molinet2025} to be globally well-posed in energy space $ H^{\frac{s}{2}}(\mathbb{R})$ for $s>\frac{4}{5}$. On the other hand, the local wellposedness of the Cauchy problem associated to fractional BBM equation was proved in $ H^s(\mathbb{R})$ for  $s>2-\frac{\alpha}{2}$ in \cite{he}. In both problems the fractional case $\alpha<1$ does not permit the use of conservation laws to globalize the solution. The numerical evidence in \cite{klein} suggests the global existence of a solution to the fractional KdV equation for $\alpha> \frac{1}{2}$ and for fractional BBM equation when $\alpha> \frac{1}{3}.$ Long time existence issues for both equations have been studied analytically in \cite{ehrns,nilsson} by using a modified energy method. With this technique, it was possible to extend the lifespans of equations with small initial data beyond the hyperbolic time scale. The analysis is carried out in the Sobolev spaces  $ H^N(\mathbb{R})$ with  $N\geq3$ for the fractional KdV equation and, in $H^{N+\frac{\alpha}{2}}(\mathbb{R})$ with  $N\geq2$  for the fractional BBM equation. From numerical point of view, the study \cite{dutykh2013geo} proposes a geometric integration method to observe long time dynamics of the KdV equation.  Moreover,
the study \cite{klein} addresses the numerical investigation of the large time behaviour of solutions to the fractional KdV and fractional BBM equations. 

A natural question arises here: How do the existence of both fractional dispersive terms namely, $D^{\alpha} u_x $ of the fractional KdV and $D^{\alpha} u_t$ of the fractional BBM in \eqref{fKdVBBM} affect the lifespan of the solutions?

A numerical study \cite{moldobayev} revealed that the solutions of the Pad\'e model with both linear terms of the KdV and BBM equations provide a better approximation to the solutions of the full Euler equations than either the KdV or BBM equations. 
Furthermore, existence of fractional derivative in the dispersive terms gives the opportunity to investigate the effects of lower dispersion on the solutions. These reasons make equation \eqref{fKdVBBM} physically relevant and mathematically interesting problem.

The aim of this study is to provide an analysis in order to extend the existence time of solutions to the Cauchy problem \eqref{fKdVBBM}-\eqref{u0} beyond the hyperbolic time scale $\frac{1}{\epsilon}$ to $\frac{1}{\epsilon^2}$ and to provide numerical investigations of the long time behaviour of these solutions.

Following \cite{hunter,ehrns,nilsson}, our main result reads.
\begin{theorem} \label{theorem}
For $0<\alpha <1$ and $N\geq 2,$ there exists $\epsilon_0>0$, such that for any initial data $u(0,x)=u_0(x)$ satisfying
$$ \|u_0 \|_{H^{N+\alpha/2}}(\mathbb{R})  \leq \epsilon \leq \epsilon_0, $$
there exists a unique solution $u \in {C}([0,T]; H^{N+\alpha/2}(\mathbb{R}))$ of \eqref{fKdVBBM}
with $T \gtrsim \frac{1}{\epsilon^2} $  and 
$$ \|u\|_{  C([0,T];H^{N+\alpha/2}(\mathbb{R})) } \lesssim \|u_0 \|_{H^{N+\alpha/2}(\mathbb{R})}.$$
\end{theorem}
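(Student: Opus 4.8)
The plan is to run a modified-energy argument combined with Fourier analysis, following the strategy of \cite{ehrns,nilsson} but adapted to the presence of \emph{both} fractional dispersive terms. First I would rewrite \eqref{fKdVBBM} in the nonlocal form $(1+\nu D^\alpha)u_t = -\kappa u_x - \lambda uu_x + \mu D^\alpha u_x$, i.e. $u_t = -(1+\nu D^\alpha)^{-1}\bigl(\kappa\partial_x + \mu D^\alpha\partial_x\bigr)u - \lambda(1+\nu D^\alpha)^{-1}(uu_x)$. Since $\nu>0$ and $0<\alpha<1$, the operator $(1+\nu D^\alpha)^{-1}$ has symbol $(1+\nu|k|^\alpha)^{-1}\le 1$, so it is bounded on every $H^s$; moreover $(1+\nu D^\alpha)^{-1}D^\alpha\partial_x$ has symbol bounded by $\nu^{-1}|k|$, which is the source of a genuine (order-one) derivative loss in the nonlinearity, exactly as in the fractional BBM case — this is why one works in $H^{N+\alpha/2}$ rather than $H^N$, the extra $\alpha/2$ regularity compensating the fractional gain $(1+\nu|k|^\alpha)^{-1/2}\sim |k|^{-\alpha/2}$ in the dispersive part. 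Local well-posedness in $C([0,T_0];H^{N+\alpha/2})$ for some $T_0>0$ I would take from (or prove in parallel with) the references \cite{oruc,he}; the whole point is to upgrade $T_0$ to $T\gtrsim \epsilon^{-2}$ via an a priori bound.

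The core step is the construction of a modified energy. The naive energy $E_0(t)=\tfrac12\int \bigl((1+\nu D^\alpha)^{1/2}\Lambda^{N+\alpha/2}u\bigr)^2\,dx$ (with $\Lambda=(1-\Delta)^{1/2}$), which is equivalent to $\|u\|_{H^{N+\alpha/2}}^2$ up to constants depending on $\nu$, has a time derivative whose dispersive contributions cancel (the operator $(1+\nu D^\alpha)^{-1}(\kappa\partial_x+\mu D^\alpha\partial_x)$ is skew-adjoint), leaving only the nonlinear term $\dot E_0 = -\lambda\int \Lambda^{N+\alpha/2}u\cdot\Lambda^{N+\alpha/2}\bigl[(1+\nu D^\alpha)^{-1/2}\!\cdot\!(1+\nu D^\alpha)^{-1/2}(uu_x)\bigr]$ — schematically a cubic term that, because of the derivative in $uu_x$ only partially tamed by $(1+\nu D^\alpha)^{-1}$, is merely $O(\|u\|_{H^{N+\alpha/2}}^3)$ and hence only gives existence up to time $\epsilon^{-1}$. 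To gain the extra power of $\epsilon$, I would add a cubic correction $E_1(t)=\int B(u,u,u)\,dx$, a trilinear Fourier multiplier form chosen so that $\dot E_1$ cancels the leading cubic part of $\dot E_0$ pointwise in frequency; solving the resulting "cohomological" equation requires the denominator — a combination of the three dispersion relations $\omega(k)=\dfrac{k(\kappa+\mu|k|^\alpha)}{1+\nu|k|^\alpha}$ evaluated at the three interacting frequencies summing to zero — to be bounded below after one removes the (symmetric, hence harmless) resonant set. Here the mixed structure helps: because $\omega$ is odd and, for $\mu/\nu$ of appropriate sign, strictly convex/superlinear enough, the non-resonance estimate $|\omega(k_1)+\omega(k_2)+\omega(k_3)|\gtrsim \langle k_{\min}\rangle^{?}$ on the support can be arranged so that the multiplier defining $B$ loses no more than $\alpha/2$ derivatives, keeping $|E_1(t)|\lesssim \|u\|_{H^{N+\alpha/2}}^3$ and $\dot E_1 + (\text{cubic part of }\dot E_0) = O(\|u\|_{H^{N+\alpha/2}}^4)$. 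Then $\mathcal E=E_0+E_1$ satisfies $\mathcal E(t)\simeq \|u(t)\|_{H^{N+\alpha/2}}^2$ for small data and $\dfrac{d}{dt}\mathcal E \lesssim \mathcal E^2$, and a continuity/bootstrap argument gives $\|u(t)\|_{H^{N+\alpha/2}}\lesssim \epsilon$ on $[0,T]$ with $T\gtrsim \epsilon^{-2}$.

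I expect the main obstacle to be exactly the non-resonance analysis of the symbol $\omega(k)=k(\kappa+\mu|k|^\alpha)/(1+\nu|k|^\alpha)$: unlike the pure fractional KdV case ($\omega\sim \mu k|k|^\alpha$, homogeneous) or pure fractional BBM ($\omega\sim (\mu/\nu)k$ for large $|k|$, so small dispersion at high frequency), the mixed symbol interpolates between a bounded-slope regime ($\omega\sim (\mu/\nu)k$ as $|k|\to\infty$) and a KdV-like regime at moderate frequencies, so one must check carefully that the resonance manifold $\{\omega(k_1)+\omega(k_2)+\omega(k_3)=0,\ k_1+k_2+k_3=0\}$ does not force an uncontrollable derivative loss and that the correction multiplier is bounded by $C\langle k\rangle^{\alpha/2}$ uniformly — and in particular that the needed lower bound survives for the admissible sign choices of $\kappa,\mu$ (recall $\kappa,\mu\in\mathbb R$ while $\lambda,\nu>0$). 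A secondary technical point is handling the low-frequency part where $(1+\nu D^\alpha)^{-1}\approx 1$ and the equation degenerates toward a transport equation $u_t+\kappa u_x+\lambda uu_x\approx 0$; there the derivative loss in $uu_x$ must be absorbed by an integration-by-parts / commutator estimate (Kato–Ponce type) rather than by the dispersive gain, but this is standard and does not affect the time scale. If the clean cancellation in $\dot E_1$ cannot be achieved on the full resonant set, the fallback — as in \cite{ehrns,nilsson} — is to retain the resonant contribution and observe it is itself quartic or has a favorable sign/symmetry, which still yields the $\epsilon^{-2}$ bound.
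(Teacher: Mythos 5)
Your proposal follows essentially the same route as the paper: a modified energy built from the weighted quadratic energy plus a cubic correction whose Fourier multiplier is obtained by dividing by the three-wave resonance function of $\omega(k)=k(\kappa+\mu|k|^{\alpha})/(1+\nu|k|^{\alpha})$ (the paper realizes this correction as the cross term $2\langle \partial_x^n(1+\nu D^{\alpha})^{1/2}u,\partial_x^n(1+\nu D^{\alpha})^{1/2}P(u,u)\rangle$ coming from the normal form $w=u+P(u,u)$ with explicit symbol $m$), followed by equivalence of the modified energy with $\|u\|_{H^{N+\alpha/2}}^2$ for small data, a quartic bound on its time derivative, and a Grönwall/bootstrap step, with local existence imported from the vanishing-viscosity argument of earlier work. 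Two small caveats: your $E_0$ double-counts the fractional weight, since $(1+\nu D^{\alpha})^{1/2}\Lambda^{N+\alpha/2}u$ controls $H^{N+\alpha}$ rather than $H^{N+\alpha/2}$ (use $\partial_x^n$, $n\le N$, with the weight, as the paper does), and the non-resonance/multiplier bound you leave as the main open obstacle is precisely the paper's two-sided estimate on $m$, proved in polar coordinates via boundedness of $H(r,\theta)$ and requiring $\kappa\nu+\mu\neq 0$ (the paper's argument in fact uses positivity of the constants there).
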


The outline of the paper is as follows. The derivation of the fractional KdV-BBM equation \eqref{fKdVBBM} and its properties are given in Section $2$. The maximal existence time of the solution is extended by using a modified energy method in Section $3$. The long time existence issues for smooth solutions are investigated numerically in Section $4$.

\subsection*{Notations}
We will use following notations throughout this paper. A  Fourier transform for  
tempered distributions $u(x)$ denoted by $\mathcal{F}(u)(k)$ with dual variable $k$ 
and its inverse are defined by
\begin{align}
	\mathcal{F}(u)(k)& = \int_{\mathbb{R}}^{}u(x)e^{-ikx}dx,\quad k\in 
	\mathbb{R},
	\nonumber\\
	u(x) & =\frac{1}{2\pi}\int_{\mathbb{R}}^{}\mathcal{F}(u)(k)e^{ikx} 
	dk,\quad x\in\mathbb{R},
	\label{fourierdef}
\end{align}
respectively.
 $L^2(\mathbb{R})$ is the usual Lebesgue space with norm $\|.\|_{L^{2}}$ and inner product $\langle f, g \rangle = \int_{\mathbb{R}} f g \ dx$. If $\frac{f}{g}$ is uniformly bounded from above we will use $f \lesssim g$ and the notaion $ \simeq$ will be used if $f \lesssim g \lesssim f$. $\mathcal{C}_{\kappa,\lambda,\nu,\mu}$ and $\mathcal{\bar{C}}_{\kappa,\lambda,\nu,\mu}$ will denote the generic constants. 

\section{The Governing Equation}
\noindent
In this section, we provide a formal derivation of the fractional KdV-BBM equation with general constants and its properties. In the context of dispersive models, the studies \cite{ADD1,CD1} present the approaches to derive BBM-type equations that achieve both Galilean invariance and energy conservation. Moreover, the authors in \cite{CD2} proposed an analysis for deriving model equations with fractional operators that still conserve these symmetry properties. Here, we follow the approach in \cite{hae} for the derivation of the fractional KdV-BBM equation. In order to make the derivation more transperent, we first briefly recall assumptions, asymptotic relations and scaling details from \cite{hae} and then show suitable choice of coefficients to get the governing equation.

As a starting point,  the well-known fractional type improved Boussinesq equation
\begin{equation}\label{Boussinesq}
  u_{tt} -u_{xx} + D^{\alpha} u_{tt}= (u^2)_{xx},
\end{equation}
is considered. We assume unidirectional (right-going) small-but-finite amplitude long wave solutions to equation \eqref{Boussinesq}.  First, the scaling transformation is introduced as
\begin{align} \label{tr1}
u(x,t)=\varepsilon U(\rho(x-t),\rho t)= \varepsilon U(Y,S)
\end{align}
 with $Y= \rho(x-t),~ S=\rho t.$ Here, $\varepsilon$ and $\rho$ are a small parameters and measure nonlinear and dispersive effects, respectively. Substitution of the scale transformation \eqref{tr1} into equation \eqref{Boussinesq} yields 
\begin{align}\label{eqi}
U_{SS}-2U_{YS}+\rho^{\alpha} D^{\alpha}(U_{SS} + U_{YY}-2U_{SY})- \varepsilon(U^2)_{YY}=0.
\end{align} 
Now, one seeks a solution in the form of single asymptotic expansion
\begin{align} \label{asp}
U(Y,S;\varepsilon, \rho)=U_0(Y,S)+ \varepsilon U_1(Y,S) + \rho^{\alpha} U_2(Y,S)+ \mathcal{O}(\varepsilon^2, \varepsilon \rho^{\alpha}, \rho^{2\alpha}),
\end{align}
as $\varepsilon, ~\rho$ $\rightarrow$ $0$. Moreover, unknowns $U_n$ $(n=0,1,2...)$ and their derivatives decay to zero as $|Y|$  $\rightarrow$ $0$. Afterwards, derivation follows inserting \eqref{asp} into equation \eqref{eqi}. For the calculation of unknowns with their derivatives and further details we refer the reader to \cite{hae}. Then, one gets
\begin{align}\label{equ}
U_S+ \varepsilon U U_Y - \frac{\rho^{\alpha}}{2}D^{\alpha}U_Y=0,
\end{align}
with $U(Y,S;\varepsilon, \rho).$ It is worth noting here that we aim to derive an equation so that it covers an additional fractional dispersive term of the KdV-type. Therefore,  the moving frame is defined by $U(Y,S)=V(aY+bS, cS)=V(X,T)$, where
\begin{equation*}
  X= a Y + b S,~~~~ \quad T = c S. 
\end{equation*}
Here $a$, $b$ and $c$ are positive constants to be chosen appropriately later. Equation \eqref{equ} in new coordinate system is given by
\begin{align*}
V_T+ \frac{b}{c}V_X + \frac{\varepsilon a}{c} V V_X + \rho^{\alpha} \frac{a^3}{2b} D^{\alpha}V_T=0. 
\end{align*}
In order to remove parameters $\varepsilon$ and $\rho$ from above equation, following scaling transformation is introduced
\begin{equation*}\label{last}
  v= \varepsilon V, \quad X=\rho \xi, \quad   T=  \rho \tau.
\end{equation*}
Later, it becomes the well-known fractional BBM equation
\begin{equation}\label{fBBM1}
  v_{\tau} + \omega v_{\xi} + 3 v v_{\xi} + D^{\alpha} v_{\tau}=0,
\end{equation}
with $\omega=\frac{3}{2}a^{\alpha}$ \cite{kapitula}. 
Using the coordinate transformation between original frame $(x,t)$ and new frame  $(\xi,\tau)$ 
\begin{equation*}\label{star}
  \xi=a x + (b-a) t  ,~~~~\quad  \tau=  c t,
\end{equation*}
one can easily calculate
\begin{align*}
  &v_{\tau} = \frac{1}{c}{v_t} + \frac{a-b}{ac} v_x, \\
  &v_{\xi} = \frac{1}{a} v_x ,\\
  &D^{\alpha} v_{\tau} = \frac{1}{a^{\alpha}} D^{\alpha} \left( \frac{1}{c}v_t+ \frac{a-b}{ac}{u_x} \right).
\end{align*}
Now, equation \eqref{fBBM1} can be written in orginial refernce frame as follows
\begin{equation} \label{fBBM2}
  v_t + (\frac{a-b +\omega c}{a} ) v_x + \frac{3c}{a} v v_x+  \frac{1}{a^{\alpha}} D^{\alpha} [  \frac{a-b}{a} v_x + v_t ] =0.
\end{equation}
Equation \eqref{fBBM2} can be simplified with an appropriate change of variables  
\begin{equation*}
 \frac{a-b +\omega c}{a} \leftarrow \kappa  , \quad \quad \frac{3c}{a}  \leftarrow \lambda , \quad \quad \frac{b-a}{a^{\alpha+1}}  \leftarrow \mu, \quad\quad   \frac{1}{a^{\alpha}} \leftarrow \nu, , \quad\quad  u \leftarrow v.
\end{equation*}
This yields the fractional KdV-BBM equation with general constants as 
\begin{equation*}
  u_t + \kappa u_x + \lambda uu_x - \mu D^{\alpha} u_x + \nu D^{\alpha} u_t=0,
\end{equation*}
 with  $\kappa, \mu \in \mathbb{R}$ and $\lambda, \nu \in \mathbb{R}^{+}$.

\subsection{Preliminaries}
In this subsection, we provide some basic facts for the standard KdV-BBM and the fractional KdV-BBM equation.
\subsubsection{Conservation Laws}
We assume that the solution $u(t,x)$ has either the compact support or decrease sufficiently fast at the infinity. Then, the following integrals are conserved by the solution of the equation  \eqref{fKdVBBM}:
\begin{eqnarray*}
\mathcal{I}_0(t)&=&\int_{\mathbb{R}} u(t,x)dx,\\
\mathcal{I}_1(t)&=&\int_{\mathbb{R}} \left( u(t,x)+ \nu D^{\alpha} u(t,x) \right)dx,\\
\mathcal{I}_2(t)&=&\int_{\mathbb{R}} \left( u^2(t,x)+ \nu |D^{\alpha/2}u(t,x)|^2 \right)dx.
\end{eqnarray*}
Besides the theoretical importance the invariants have practical one. For instance they may help to check the global accuracy of the proposed numerical scheme. 
\begin{remark}
It is worth noting here that the quantities $\mathcal{I}_0(t)$, $\mathcal{I}_1(t)$ and $\mathcal{I}_2(t)$ are conserved by the flow due to stability by Fourier transform for any functions in Schwartz space. 
\end{remark}
\subsubsection{Solitary Waves}
Solitary waves are known as localized traveling wave solutions of the form $u(t,x)=Q_c(\xi)$, $\xi=x-ct$ with constant wave speed $c$ and given condition $\lim_{|\xi| \rightarrow \infty} Q_c(\xi)=0.$
The standard KdV-BBM equation ($\alpha=2$) admits exact solitary wave solutions of the form 
$$ u(t,x)= 3\frac{c-\kappa}{\lambda}\operatorname{sech}^2\left( \frac{1}{2}   \sqrt{\frac{c-\kappa}{\nu c+ \mu}}(x-ct)   \right),$$
with given wave speed $c$, \cite{mitsotakis}.
Moreover, the solitary wave solution of the fractional KdV-BBM equation for $\alpha=1$ can be implicitly obtained as
$$ u(t,x)= \frac{4(c-1)}{1+\left[\frac{(c-1)}{c+1}\right]^2(x-ct)^2},$$
where $\kappa= \lambda= \mu =\nu =1$, \cite{oruc}. 

\setcounter{equation}{0}
\section{Existence in Large Time Scale}
\setcounter{equation}{0}
In order to remove the quadratic term in equation  \eqref{fKdVBBM}, one may use the standard approach normal form transformation \cite{shatah}. However, this approach is not applicaple for problems of this type because of the unboundedness of the transformation \cite{hunter}. Therefore, we will use the normal form transformation to obtain the modified energy functional for the Cauchy problem \eqref{fKdVBBM}-\eqref{u0}.  

\subsection{The Normal Form Transform}
In this subsection we aim to remove quadratically nonlinear term in equation \eqref{fKdVBBM}, by transforming it to the cubic nonlinear equation. To this end, we define a new variable $w$ and use the normal form transformation $w=u + P(u,u)$. Here, the bilinear form $P(u,u)$ is the pseudo-product and defined by the help of the Fourier transform, i.e.,
\begin{equation}\label{fourier}
  \mathcal{F}(P(q_1,q_2))(k) = \int_{\mathbb{R}} m(k- l,l) \hat{q_1}(k-l)\hat{q
_2}(l)d l,
\end{equation}
where $m$ can be determined following the further steps. First, we substitute the new variable $w$ and obtain the following PDE
\begin{align} \label{transform}
w_t+\kappa w_x  -\mu D^{\alpha}w_x +\nu D^{\alpha}w_t = & -\lambda uu_x  -(1+ \nu D^{\alpha})P((1+\nu D^{\alpha})^{-1} \kappa u_x,u)  \nonumber \\ &-(1+ \nu D^{\alpha})P(u,(1+\nu D^{\alpha})^{-1} \kappa u_x)  \nonumber \\ 
&+  (1+ \nu D^{\alpha})P((1+\nu D^{\alpha})^{-1} \mu D^{\alpha} u_x, u) \nonumber\\
& + (1+ \nu D^{\alpha})P(u,(1+\nu D^{\alpha})^{-1} \mu D^{\alpha} u_x)  \nonumber \\ 
 &+ (\kappa-\mu D^{\alpha})P(u,u)_x + R(u),
\end{align}
where
\begin{equation*}
  R(u)=-(1+\nu D^{\alpha})[P((1+\nu D^{\alpha})^{-1} \lambda u u_x, u) + P(u, (1+\nu D^{\alpha})^{-1} \lambda u u_x)].
\end{equation*}
Second, we write the transformed equation \eqref{transform}  in Fourier space, i.e.,
\begin{align} \label{ftransform}
\mathcal{F} (w_t+\kappa w_x  -\mu D^{\alpha}w_x  +\nu D^{\alpha}w_t) (k) =& \mathcal{F} \{ -\lambda uu_x  \nonumber\\
&-(1+ \nu D^{\alpha})P((1+\nu D^{\alpha})^{-1} \kappa u_x,u)  \nonumber \\ 
&-(1+ \nu D^{\alpha})P(u,(1+\nu D^{\alpha})^{-1} \kappa u_x)  \nonumber \\ 
&+  (1+ \nu D^{\alpha})P((1+\nu D^{\alpha})^{-1} \mu D^{\alpha} u_x, u)  \nonumber \\ 
& + (1+ \nu D^{\alpha})P(u,(1+\nu D^{\alpha})^{-1} \mu D^{\alpha} u_x) \nonumber \\ 
&+ (\kappa-\mu D^{\alpha})P(u,u)_x + R(u) \}(k).
\end{align}
First, we write the Fourier transform of the first term in right hand side of the above equation as
\begin{equation}\label{fourier0}
  \mathcal{F} \{ \lambda u u_x \}(k) =\frac{i}{2} \int_{\mathbb{R}} \lambda k \hat{u}(k-l) \hat{u}(l) dl.
\end{equation}
Moreover, we use the definition \eqref{fourier} for the other terms in the equation \eqref{ftransform}. These terms can be rewritten as
\begin{align} \label{fourier1}
  \mathcal{F} \{ (1+ \nu D^{\alpha})P((1+&\nu D^{\alpha})^{-1} \kappa u_x,u) \}(k) \nonumber \\ 
  &=i  \kappa \int_{\mathbb{R}} (1+ \nu |k |^{\alpha})\frac{ (k-l) m(k-l,l) }{1+ \nu |k-l|^{\alpha}}  \hat{u}(k-l) \hat{u}(l) dl, \nonumber \\
  \mathcal{F} \{ (1+ \nu D^{\alpha})P(u,(&1+\nu D^{\alpha})^{-1} \kappa u_x) \}(k) \nonumber \\
  & = i \kappa \int_{\mathbb{R}} (1+ \nu |k |^{\alpha})  \frac{l \  m(k-l,l)}{1+ \nu|l|^{\alpha}}  \hat{u}(k-l) \hat{u}(l) dl, \nonumber \\
  \mathcal{F} \{ (1+ \nu D^{\alpha})P((1+&\nu D^{\alpha})^{-1} \mu D^{\alpha} u_x,u) \}(k) \nonumber \\
 &= i \mu \int_{\mathbb{R}} (1+ \nu |k |^{\alpha})  \frac{ |k-l |^{\alpha + 1} m(k-l,l)}{1+ \nu|k-l|^{\alpha}}  \hat{u}(k-l) \hat{u}(l) dl, \nonumber \\
  \mathcal{F} \{ (1+ \nu D^{\alpha})P(u,(&1+\nu D^{\alpha})^{-1} \mu D^{\alpha} u_x) \}(k) \nonumber \\
&=i \mu \int_{\mathbb{R}} (1+ \nu |k |^{\alpha}) \frac{ |l |^{\alpha + 1}m(k-l,l)}{1+ \nu | l | ^{\alpha}}   \hat{u}(k-l) \hat{u}(l) dl, \nonumber \\
  \mathcal{F} \{  (\kappa-\mu D^{\alpha})P(u,u&)_x   \}(k) \nonumber \\
&= i \int_{\mathbb{R}} (\kappa-\mu |k |^{\alpha}) \ k \ m(k-l,l) \hat{u}(k-l) \hat{u}(l) dl.
  \end{align}
Let us use the transforms \eqref{fourier0} and \eqref{fourier1} to remove the quadratic terms in equation \eqref{ftransform}. Then, $m$ is expected to satisfy the following integral equation
\begin{align*}
  \int_{\mathbb{R}} &\Big (-\frac{1}{2}\lambda k +  m(k-l,l) \big[ (\kappa-\mu |k|^{\alpha})k - (1+\nu |k|^{\alpha}) (1+\nu |k-l|^{\alpha})^{-1} \kappa \ (k-l)   \nonumber \\
  & -  (1+\nu |k|^{\alpha}) (1+\nu |l|^{\alpha})^{-1} \kappa\ l +(1+\nu |k|^{\alpha})(1+\nu |k-l|^{\alpha})^{-1} \mu |k-l|^{\alpha}(k-l)          \nonumber \\
  & \hspace{104pt}+  (1+\nu |k|^{\alpha}) (1+\nu |l|^{\alpha})^{-1} \mu  |l|^{\alpha}l ] \Big ) \hat{u}(k-l) \hat{u}(l) dl= 0.
\end{align*}
According to calculations above $m$ is determined by
\begin{align*} \label{m}
m(k&-l,l) \\
&=\frac{ \lambda k (1+\nu |k-l|^{\alpha})(1+\nu |l|^{\alpha}) }{ 2(\kappa \nu + \mu) [k (1+\nu |l|^{\alpha})(|k-l|^{\alpha}-|k|^{\alpha}) + l (1+\nu |k|^{\alpha})(|l|^{\alpha}-|k-l|^{\alpha})]},
\end{align*}
which implies $m(k-l,l)=m(l,k-l)$. Hence, we conclude that $m$ is symmetric in  $k-l$ and $l$.
Under such a choice of $m$, \eqref{transform} finally turns into 
\begin{equation*}
w_t+\kappa w_x  -\mu D^{\alpha}w_x +\nu D^{\alpha}w_t= R(u).
\end{equation*}
\begin{proposition} Let $\alpha \in (0,1)$. Then, there exist $\mathcal{\underline{C}}_{\kappa,\lambda,\nu,\mu}$, $\mathcal{\bar{C}}_{\kappa,\lambda,\nu,\mu} >0$, such that
\begin{eqnarray}
 && \hspace{-30pt}\mathcal{\underline{C}}_{\kappa,\lambda,\nu,\mu}  \left(    \frac{|k-l|^{\alpha}(k-l)}{|l|}  + \frac{|l|^{\alpha}l}{|k-l|}   \right) \lesssim |m(k-l,l)| \nonumber \\
& &\hspace{80pt}\lesssim \mathcal{\bar{C}}_{\kappa,\lambda,\nu,\mu} \left(    \frac{|k-l|^{1-\alpha}}{|l|}  + \frac{|l|^{1-\alpha}}{|k-l|}   \right), \quad  (k-l)^2+l^2 \leq 1,\nonumber \\
\label{m_in1} \\
&&\hspace{-30pt} \mathcal{\underline{C}}_{\kappa,\lambda,\nu,\mu} \left(    \frac{|k-l|^{1-\alpha}}{|l|}  + \frac{|l|^{1-\alpha}}{|k-l|}   \right) \lesssim |m(k-l,l)|\nonumber\\ 
&& \hspace{62pt}\lesssim \mathcal{\bar{C}}_{\kappa,\lambda,\nu,\mu} \left(    \frac{|k-l|^{\alpha}(k-l)}{|l|}  + \frac{|l|^{\alpha}l}{|k-l|}   \right),  \quad (k-l)^2+l^2 \geq 1.  \nonumber \\
\label{m_in2}. 
\end{eqnarray}
\end{proposition}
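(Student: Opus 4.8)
\emph{Proof plan.} The idea is to first recast $m$ in a form that exposes its resonance structure and then estimate the pieces in finitely many frequency regions. Expanding the products $(1+\nu|\cdot|^{\alpha})$ in the explicit formula for $m$ and regrouping, one verifies that
\begin{equation*}
m(k-l,l)=\frac{-\lambda\, k}{2(\kappa\nu+\mu)\,(1+\nu|k|^{\alpha})\,\Omega(k,l)},\qquad \Omega(k,l):=\phi(k)-\phi(k-l)-\phi(l),
\end{equation*}
where $\phi(\xi):=|\xi|^{\alpha}\xi/(1+\nu|\xi|^{\alpha})$, so that, up to the harmless factor $1+\nu|k|^{\alpha}$, the denominator of $m$ is the resonance function $\Omega$ associated with the dispersion relation of \eqref{fKdVBBM}. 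The symbol $\phi$ is odd, of class $C^{1}$, with $\phi'(\xi)\simeq|\xi|^{\alpha}$ and $|\phi''(\xi)|\lesssim|\xi|^{\alpha-1}$ for $0<|\xi|\lesssim1$, and $\phi'(\xi)\simeq1$ for $|\xi|\gtrsim1$; writing $\phi(\xi)=\xi/\nu-\tfrac1\nu\,\xi/(1+\nu|\xi|^{\alpha})$ shows that the part of $\phi$ that survives in $\Omega$ is homogeneous of degree $1+\alpha$ at low frequency and of degree $1-\alpha$ at high frequency, which is the source of the exponents $1\pm\alpha$ in \eqref{m_in1}--\eqref{m_in2}.

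By the symmetry $m(k-l,l)=m(l,k-l)$ it suffices to bound $|m|$ on $\{|l|\le|k-l|\}$, where $|k|\le2|k-l|$, and $|k|\simeq|k-l|$ as soon as $|l|\le\tfrac12|k-l|$. The numerator $\lambda|k|(1+\nu|k-l|^{\alpha})(1+\nu|l|^{\alpha})$ is controlled at once by $1+\nu|\xi|^{\alpha}\simeq\max\{1,|\xi|^{\alpha}\}$, so everything reduces to two-sided bounds for $\Omega$. I would obtain these by a case analysis: first according to whether $(k-l)^{2}+l^{2}\le1$ or $\ge1$, then according to the size of $|l|$ relative to $|k-l|$ (and, at high frequency, according to whether $|l|\lesssim1$), and finally according to whether $|k|$ is small. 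Away from the corner $k\approx0$, the mean value identity $\phi(k)-\phi(k-l)=\phi'(\xi)\,l$ together with the size of $\phi(l)$ gives a clean two-sided bound for $\Omega$ — of order $|l|\,|k-l|^{\alpha}$ at low frequency and of order $|l|/(1+|l|^{\alpha})$ at high frequency — and the near-resonance $l\approx0$ is what produces the $1/|l|$ growth of $m$. In the corner $k\approx0$ a second order Taylor expansion of $\phi$ about $k-l$, with remainder estimated by the bound on $|\phi''|$, shows that $|\Omega|$ is of order $|k|\,\phi'(|k-l|)$, so that the apparent pole of $m$ at $k=0$ is exactly cancelled by the factor $k$ in the numerator. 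Multiplying the numerator estimate by these bounds and checking — with $|l|\le|k-l|$ and the elementary fact that $r^{1+\alpha}\le r^{1-\alpha}$ for $0<r\le1$ while $r^{1-\alpha}\le r^{1+\alpha}$ for $r\ge1$ — that the outcome always lies between the two sides of \eqref{m_in1}--\eqref{m_in2} completes the proof; since these bounds are only required to bracket $|m|$ and are not claimed to be sharp, this last verification is routine.

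The genuine difficulty is the lower bound on $|\Omega|$, equivalently the upper bounds on $|m|$ in \eqref{m_in1}--\eqref{m_in2}: one must rule out faster-than-expected cancellation in $\phi(k)-\phi(k-l)-\phi(l)$. This requires a careful analysis near the resonant set, using the strict superadditivity of $\xi\mapsto|\xi|^{\alpha}\xi$ on half-lines at low frequency and the strict subadditivity of $\xi\mapsto|\xi|^{1-\alpha}\xi$ at high frequency, together with the constraint $|l|\le|k-l|$, to show that $\Omega$ vanishes only at $l=0$ and $k=0$ and to quantify the rate of vanishing there; it also uses crucially that the apparent singularity of $m$ at $k=0$ is removable because the numerator carries the same factor $k$. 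A secondary, purely bookkeeping, issue is matching the low- and high-frequency analyses across $|k-l|\simeq1$ (and, when only one frequency is large, across $|l|\simeq1$), where one has to keep track of which of the two summands in each bracket of \eqref{m_in1}--\eqref{m_in2} dominates; all implied constants are allowed to depend on $\kappa,\lambda,\mu,\nu$, which absorbs these transitions.
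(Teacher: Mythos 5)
Your reformulation of $m$ is correct: expanding the products one checks that the denominator of $m$ equals $-(1+\nu|k|^{\alpha})(1+\nu|k-l|^{\alpha})(1+\nu|l|^{\alpha})\,\Omega(k,l)$ with $\Omega=\phi(k)-\phi(k-l)-\phi(l)$ and $\phi(\xi)=|\xi|^{\alpha}\xi/(1+\nu|\xi|^{\alpha})$, so your identity for $m$ holds, and your route (Cartesian variables, resonance function, symmetry reduction to $|l|\le|k-l|$, mean value/Taylor expansions near the zero lines) is genuinely different from the paper's, which passes to polar coordinates $k-l=r\cos\theta$, $l=r\sin\theta$, factors the denominator as $r^{1+\alpha}\cos\theta\sin\theta(\cos\theta+\sin\theta)H(r,\theta)$, and reads the two-sided bounds off \eqref{m_last} once $H$ is known to be bounded above and below away from zero. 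If completed, your argument would have the advantage of being self-contained and of exhibiting the exponents $1\pm\alpha$ directly from the dispersion relation; the paper's factorization is shorter but outsources the crux to a citation.

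That crux is where your proposal has a genuine gap: the uniform lower bound on $|\Omega|$ away from the lines $l=0$, $k-l=0$, $k=0$, with the correct vanishing rates on approach to them -- equivalently the upper bounds on $|m|$ in \eqref{m_in1}--\eqref{m_in2} -- is described but not carried out. The mean-value identity $\phi(k)-\phi(k-l)=\phi'(\xi)\,l$ by itself only bounds that difference from above; to bound $|\Omega|$ from below you must rule out cancellation between $\phi'(\xi)\,l$ and $\phi(l)$, and this is exactly the ``careful analysis near the resonant set'' that you defer. In particular, in the mixed-sign regime (say $l<0<k-l$ with $|l|\simeq|k-l|$ but $k$ not small) the super/subadditivity you invoke must be made quantitative and uniform across the matching regions $|k-l|\simeq 1$ and $|l|\simeq 1$; knowing only that $\Omega$ vanishes on the three lines is not enough, since the proposition requires the two-sided rate. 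This missing step is precisely the content of the statement that $H(r,\theta)$ is bounded and bounded away from zero, which the paper itself does not prove but imports from \cite{ehrns}; since your plan does not lean on that citation, as written it is an accurate and well-structured outline rather than a complete proof.
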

\begin{proof}
First we propose polar coordinates: $k-l=r \cos(\theta)$ and $l= r\sin(\theta)$ with $r>0$ and $0 \leq \theta < 2\pi.$ Then, $m(k-l,l)$ becomes
\begin{equation}\label{m_polar}
m(r,\theta)= \frac{\lambda r (\cos(\theta) + \sin(\theta)) (1+ \nu |r \cos(\theta)|^{\alpha}) (1+ \nu |r \sin(\theta)|^{\alpha})}{2(\kappa \nu + \mu) M(r\cos(\theta), r\sin(\theta))},
\end{equation}
where
\begin{align*}
M(r&\cos(\theta),r\sin(\theta))  \\ &\hspace{-2pt}= r (\cos(\theta)+ \sin(\theta)) (1+ \nu |r \sin(\theta)|^{\alpha}) ( |r \cos(\theta)|^{\alpha}  - |r \sin(\theta)+ r\cos(\theta)|^{\alpha}) \\
&\hspace{54pt} + r \sin(\theta) (1+ \nu |r \sin(\theta)  + r\cos(\theta)|^{\alpha}) ( |r \sin(\theta)|^{\alpha} - |r\cos(\theta)|^{\alpha})\\ 
& =: r^{1+\alpha} \tilde{M}(r\cos(\theta), r\sin(\theta)).
\end{align*}
Note, that $\tilde{M}(r\cos(\theta), r\sin(\theta))=0$, whenever $\cos(\theta)=0$, $\sin(\theta)=0$ or $\cos(\theta)+\sin(\theta)=0.$ Therefore, $\tilde{M}(r\cos(\theta), r\sin(\theta))$ can be written in the following form
$$ \tilde{M}(r\cos(\theta), r\sin(\theta))=\cos(\theta)\sin(\theta)(\cos(\theta)+\sin(\theta))H(r,\theta).$$
Here, the function $H(r,\theta)$ is a bounded function and is also bounded away from zero and infinity for $r > 0$, which was shown in \cite{ehrns}. Now, equation \eqref{m_polar} turns
\begin{equation} \label{m_last}
m(r,\theta)= \frac{\lambda r^{2-\alpha}  (1+ \nu |r \cos(\theta)|^{\alpha}) (1+ \nu |r \sin(\theta)|^{\alpha})}{2(\kappa \nu + \mu)r\cos(\theta)r\sin(\theta) H(r,\theta)}.
\end{equation}
Since $\kappa,\lambda,\nu,\mu$ are postive constants then we conclude that the term $ \frac{\lambda}{2(\kappa \nu + \mu)H(r,\theta)}$ in \eqref{m_last} is also a bounded function such that
$$
\mathcal{\underline{C}}_{\kappa,\lambda,\nu,\mu} \leq \frac{\lambda}{2(\kappa \nu + \mu)H(r,\theta) } \leq \mathcal{\bar{C}}_{\kappa,\lambda,\nu,\mu},
$$
which completes the proof.
\end{proof}


\subsection{The Modified Energy}
The vanishing viscosity method guarantees the existence and uniqueness of solution $u \in {C}([0,T],H^{N+\frac{\alpha}{2}}(\mathbb{R}) )$ to problem \eqref{fKdVBBM}-\eqref{u0} with  $ \frac{1}{\epsilon} \lesssim  T $ and $N\geq 2$. The proof follows taking into account the general constants $\kappa, \lambda, \mu, \nu $ instead of $1,1,-\frac{3}{4}, \frac{5}{4}$ in the proof of Theorem 2.7 \cite{oruc}.
Since it seems difficult to obtain the global well-posedness  when $ 0 < \alpha < 1$, we aim to prove Theorem \ref{theorem} with an extention of the existence time. To this end, we need an a priori $H^{N+\frac{\alpha}{2}}$-bound for $u \in {C}([0,T],H^{N+\frac{\alpha}{2}}(\mathbb{R}))$. Here we use the technique given in \cite{hunter} and propose the {modified energy} for the original equation \eqref{fKdVBBM}, because the standard energy estimate can not be applied to an equation of the form
\begin{equation} \label{transform2}
w_t+\kappa w_x +\nu D^{\alpha}w_t -\mu D^{\alpha}w_x = R(u),
\end{equation}
where $R(u)$ has a cubic type nonlinearity. By the help of normal transform $w=u + P(u,u)$ proposed in previous subsection, we can write
\begin{align} \label{parenergy}
 \| \partial_x^n(1+\nu D^{\alpha})^{1/2} w \|_{L^2}^2 &=  \| \partial_x^n(1+\nu D^{\alpha})^{1/2} u \|_{L^2}^2 \nonumber \\ 
&+  2 \langle \partial_x^n(1+\nu D^{\alpha})^{{1}/{2}}  u, \partial_x^n(1+\nu D^{\alpha})^{{1}/{2}} P(u,u) \rangle \nonumber\\
&  +\| \partial_x^n(1+\nu D^{\alpha})^{{1}/{2}} P(u,u)\|_{L^2}^2.
\end{align}
Neglecting the higher derivative quartic term in \eqref{parenergy}, we define the $n^{th}$ partial energy as follows
\begin{equation}\label{nparenergy}
E_n(t)= \| \partial_x^n(1+\nu D^{\alpha})^{1/2} u \|_{L^2}^2 +  2 \langle \partial_x^n(1+\nu D^{\alpha})^{1/2} u, \partial_x^n(1+\nu D^{\alpha})^{1/2} P(u,u) \rangle.
\end{equation}
The modified energy is defined by taking the sum over $n$ from $1$ to $N$ and simply adding $L_2$-term to the total energy, which corresponds to $n=0.$ Then, the modified energy is almost equivalent to the $H^{N+\alpha/2}(\mathbb{R})$ Sobolev energy for sufficiently small data.
\begin{lemma} Let $E_n$ be defined by \eqref{nparenergy} and $\alpha \in (0,1)$. Then, there exists $\epsilon>0$ such that
\begin{equation}\label{En}
E^{(N)}(t) :=\sum_{n=1}^{N}E_n(t)+ \|   (1+\nu D^{\alpha})^{1/2} u    \|^2_{L^2(\mathbb{R})} \simeq   \|  u \| ^2_{H^{N+\alpha/2}(\mathbb{R})},
\end{equation}
uniformly for   $\|  u \|_{H^{N+\alpha/2}(\mathbb{R})} < \epsilon.$
\end{lemma}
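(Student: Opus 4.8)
The plan is to show the two inequalities $E^{(N)}(t) \lesssim \|u\|_{H^{N+\alpha/2}}^2$ and $\|u\|_{H^{N+\alpha/2}}^2 \lesssim E^{(N)}(t)$ for $\|u\|_{H^{N+\alpha/2}} < \epsilon$, the key point being that the quadratic cross term $\langle \partial_x^n(1+\nu D^\alpha)^{1/2}u, \partial_x^n(1+\nu D^\alpha)^{1/2}P(u,u)\rangle$ is controlled by $\epsilon\|u\|_{H^{N+\alpha/2}}^2$ and so is a small perturbation of the main quadratic part. First I would observe that $\|(1+\nu D^\alpha)^{1/2}v\|_{L^2}^2 = \int(1+\nu|k|^\alpha)|\hat v(k)|^2\,dk \simeq \|v\|_{H^{\alpha/2}}^2$ since $1+\nu|k|^\alpha \simeq \langle k\rangle^\alpha$ for $\nu>0$; applying this with $v=\partial_x^n u$ and summing over $n=0,\dots,N$ gives $\sum_{n=0}^N \|\partial_x^n(1+\nu D^\alpha)^{1/2}u\|_{L^2}^2 \simeq \|u\|_{H^{N+\alpha/2}}^2$. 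So the leading part of $E^{(N)}(t)$ is equivalent to the target norm, and everything reduces to estimating the cross terms.

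Next I would estimate $|\langle \partial_x^n(1+\nu D^\alpha)^{1/2}u, \partial_x^n(1+\nu D^\alpha)^{1/2}P(u,u)\rangle|$. By Cauchy–Schwarz this is bounded by $\|\partial_x^n(1+\nu D^\alpha)^{1/2}u\|_{L^2}\,\|\partial_x^n(1+\nu D^\alpha)^{1/2}P(u,u)\|_{L^2} \lesssim \|u\|_{H^{N+\alpha/2}}\,\|P(u,u)\|_{H^{N+\alpha/2}}$, so it suffices to prove the product-type bound $\|P(u,u)\|_{H^{N+\alpha/2}} \lesssim \|u\|_{H^{N+\alpha/2}}^2$. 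For this I would go to Fourier side: $\widehat{P(u,u)}(k)=\int m(k-l,l)\hat u(k-l)\hat u(l)\,dl$, and use the symbol bounds from the Proposition. In the low-frequency regime $(k-l)^2+l^2\le 1$ the upper bound in \eqref{m_in1} gives $|m(k-l,l)| \lesssim |k-l|^{1-\alpha}/|l| + |l|^{1-\alpha}/|k-l|$, which is integrable against $L^2$ functions near the origin since $1-\alpha>0$ and the singularities $1/|l|$, $1/|k-l|$ are mild after pairing with the complementary Hölder factor; in the high-frequency regime $(k-l)^2+l^2\ge 1$ the upper bound in \eqref{m_in2} gives $|m(k-l,l)| \lesssim |k-l|^\alpha(k-l)/|l| + |l|^\alpha l/|k-l|$, and here one distributes $\langle k\rangle^{N+\alpha/2}\lesssim \langle k-l\rangle^{N+\alpha/2}+\langle l\rangle^{N+\alpha/2}$, absorbs the $|k-l|^{1+\alpha}$ or $|l|^{1+\alpha}$ factor into one of the $\langle\cdot\rangle^{N+\alpha/2}$ factors (using $N\ge 2 > 1+\alpha/2$ so there is room to spare), leaving a factor $1/|l|$ or $1/|k-l|$ times $\langle l\rangle$ or $\langle k-l\rangle$ which is bounded on the high-frequency set; then a Young/Cauchy–Schwarz convolution estimate in the spirit of the standard $H^s$ algebra argument ($s=N+\alpha/2 > 1/2$) closes the bound. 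This is essentially the bilinear estimate already carried out in \cite{ehrns,nilsson} adapted to the present symbol, so I would cite those works for the routine parts.

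Having both pieces, I would conclude: write $E^{(N)}(t) = A + B$ with $A=\sum_{n=0}^N\|\partial_x^n(1+\nu D^\alpha)^{1/2}u\|_{L^2}^2 \simeq \|u\|_{H^{N+\alpha/2}}^2$ (taking $E_0$ to mean the added $L^2$-type term as in \eqref{En}) and $|B| \le C\epsilon\|u\|_{H^{N+\alpha/2}}^2$ by the cross-term estimate together with $\|u\|_{H^{N+\alpha/2}}<\epsilon$. Choosing $\epsilon$ small enough that $C\epsilon \le \tfrac12$ gives $\tfrac12 A \le E^{(N)}(t) \le \tfrac32 A$, hence $E^{(N)}(t) \simeq \|u\|_{H^{N+\alpha/2}}^2$, which is \eqref{En}. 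The main obstacle is the bilinear symbol estimate, specifically verifying that the low-frequency singularities $1/|l|$ and $1/|k-l|$ in the bounds for $m$ do not destroy integrability — this requires splitting the convolution integral according to which of $|k-l|$, $|l|$ is smaller and using that the complementary factor $|k-l|^{1-\alpha}$ or $|l|^{1-\alpha}$ (with $1-\alpha>0$) compensates, exactly the delicate point handled in \cite{ehrns}; everything else is bookkeeping with Sobolev weights.
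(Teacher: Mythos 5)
There is a genuine gap at the heart of your argument: the reduction of the cross term via Cauchy--Schwarz to the bilinear bound $\|P(u,u)\|_{H^{N+\alpha/2}} \lesssim \|u\|_{H^{N+\alpha/2}}^2$ is not available, because the pseudo-product symbol $m$ is of \emph{positive order} at high frequencies. By the upper bound in \eqref{m_in2}, in the regime where one frequency is small and the other is comparable to the output frequency $k$ one has $|m(k-l,l)|\sim \langle k\rangle^{1+\alpha}/|{\min(|k-l|,|l|)}|$, so $P$ loses $1+\alpha$ derivatives: to place $\langle k\rangle^{N+\alpha/2}\,\widehat{P(u,u)}(k)$ in $L^2$ you would need the high-frequency factor of $u$ to carry $N+\alpha/2+1+\alpha$ derivatives, i.e.\ $u\in H^{N+3\alpha/2+1}$, not $u\in H^{N+\alpha/2}$. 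Your remark that one can ``absorb the $|k-l|^{1+\alpha}$ factor into one of the $\langle\cdot\rangle^{N+\alpha/2}$ weights since $N\geq 2>1+\alpha/2$'' misidentifies the obstruction: the problem is not the size of $N$ relative to $1+\alpha$, but that in the worst configuration the entire output weight \emph{and} the symbol growth must be paid by a single factor. This unboundedness of the normal form is precisely why the paper states that Shatah's transformation cannot be applied directly, and it is why no product-type estimate on $P(u,u)$ alone can close the lemma.

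The paper's proof avoids this by never estimating $\|P(u,u)\|_{H^{N+\alpha/2}}$: it works with the full trilinear form. After a Leibniz expansion of $\partial_x^n P(u,u)$ into $2A_0+\sum_{j=1}^{n-1}b_{n,j}A_j$, the dangerous term $A_0$ (all $n$ derivatives on one factor inside $P$) is split as $A_0=A_0^1+A_0^2$ by writing $k=(k-l)+l$, and the term $A_0^2$, which carries $n+1$ derivatives on one factor and cannot be bounded directly, is eliminated by a symmetrization: a change of variables combined with the algebraic identity $m(k-l,l)\,l\,(1+\nu|k|^{\alpha})+m(l-k,k)\,k\,(1+\nu|l|^{\alpha})=0$ (a structural consequence of the normal-form choice of $m$) yields $A_0^2=-A_0$, hence $2A_0=A_0^1$. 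Only $A_0^1$, which contains the extra good factor $(k-l)$ that both kills the low-frequency singularity and restores one derivative, is then estimated by Cauchy--Schwarz, Plancherel and Sobolev embedding; the balanced terms $A_j$, $1\le j\le n-1$, are handled directly. This cancellation is the key idea your proposal is missing, and without it the cross term in \eqref{nparenergy} cannot be shown to be $O(\epsilon)\|u\|^2_{H^{N+\alpha/2}}$, so the equivalence \eqref{En} does not follow from your argument. The final smallness/absorption step in your last paragraph is fine once the cross-term bound is actually established.
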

\begin{proof}
Let us rewrite equation \eqref{En} in the following form
\begin{align*}
 E^{(N)}(t)  =  \|   (1+\nu D^{\alpha})^{1/2} u    \|^2_{L^2(\mathbb{R})} + \sum_{n=1}^{N} \Big( \| \partial_x^n(1+\nu D^{\alpha})^{1/2} u \|_{L^2}^2 \\
 + 2   \langle \partial_x^n(1+\nu D^{\alpha})^{1/2} u, \partial_x^n(1+\nu D^{\alpha})^{1/2} P(u,u) \rangle \Big). \nonumber 
\end{align*}
The first two terms in above equation can be controlled by the  $H^{N+\alpha/2}(\mathbb{R})$-Sobolev norm whenever $\|u\|_{H^{N+\alpha/2}(\mathbb{R})}<\epsilon.$ Therefore, we focus on the last one $ \langle \partial_x^n(1+\nu D^{\alpha})^{1/2} u, \partial_x^n(1+\nu D^{\alpha})^{1/2} P(u,u) \rangle.$
In the first step we use the symmetry of $P$ and write the following decomposition.
\begin{align*}
 &2  \langle (1+\nu D^{\alpha})^{1/2}  \partial_x^n u, (1+\nu D^{\alpha})^{1/2} P(u, \partial_x^n  u) \rangle \\ &+ \sum_{j=1}^{n-1} b_{n,j}  \langle (1+\nu D^{\alpha})^{1/2} \partial_x^n u, (1+\nu D^{\alpha})^{1/2} P(  \partial_x^j u, \partial_x^{n-j} u) \rangle 
:=2A_0 + \sum_{j=1}^{n-1} b_{n,j} A_j.
 \end{align*}
Here, we denote the binomial coefficients in the expression by $b_{n,j}$. As stated before, the challenging term is $A_0$ beacuse of the properties $m.$ By integration by parts and a change of variables, we have
\begin{align*}
A_0 &= \int_{\mathbb{R}}\int_{\mathbb{R}} m(k-l,l) \hat{u}(k-l)(il)^{n}  \hat{u}(l)  (1+\nu|k|^{\alpha}) \overline{ (ik)^{n} \hat{u}(k)}   dl dk    \\
&=  \int_{\mathbb{R}}\int_{\mathbb{R}} m(k-l,l) \hat{u}(k-l)i (k-l)  (il)^{n}  \hat{u}(l)  (1+\nu|k|^{\alpha}) \overline{ (ik)^{n-1} \hat{u}(k)}   dl dk     \\
& \hspace{15pt}- \int_{\mathbb{R}}\int_{\mathbb{R}} m(k-l,l) \hat{u}(k-l)(il)^{n+1}  \hat{u}(l)  (1+\nu|k|^{\alpha}) \overline{ (ik)^{n-1} \hat{u}(k)}   dl dk  \\
&=:A_0^1 + A_0^2.
\end{align*}
In a first step, we estimate $A_0^1$. Notice that
\begin{align*}
&m(k-l,l)\\
&= \frac{\lambda k (1+\nu |k-l|^{\alpha})(1+\nu |l|^{\alpha}) }{ 2(\kappa \nu + \mu) [k (1+\nu |l|^{\alpha})(|k-l|^{\alpha}-|k|^{\alpha}) + l (1+\nu |k|^{\alpha})(|l|^{\alpha}-|k-l|^{\alpha})]},
\end{align*}
has singularities at $k=0$, $l=0$ and $k-l=0$. On the other hand, low frequency singularities can be cancelled by the term $(k-l)(il)^n \overline{(ik)^{n-1}}$ appearing in $A_0^1.$ Thus, we first rewrite  $A_0^1$ as 
\begin{align}\label{aa01}
A_0^1 =  \int \int_{|k-l|^2+|l|^2 \leq 1} m(k-l,l) \hat{u}(k-l)i (k-l)  (il)^{n}  \hat{u}(l)  (1+\nu|k|^{\alpha}) \nonumber \\ 
\times \overline{ (ik)^{n-1} \hat{u}(k)}   dl dk  \nonumber \\
+ \int \int_{|k-l|^2+|l|^2 \geq 1} m(k-l,l) \hat{u}(k-l)i (k-l)  (il)^{n}  \hat{u}(l)  (1+\nu|k|^{\alpha}) \nonumber \\
\times \overline{ (ik)^{n-1} \hat{u}(k)}   dl dk, 
\end{align}
and then we consider high frequencies by using  \eqref{m_in2}
\begin{align*}
 |m(k-l,l)| &\lesssim \mathcal{\bar{C}}_{\kappa,\lambda,\nu,\mu} \left(    \frac{|k-l|^{1+\alpha}}{|l|}  + \frac{|l|^{1+\alpha}}{|k-l|}   \right)\\ &= \mathcal{\bar{C}}_{\kappa,\lambda,\nu,\mu} \left(    \frac{|k-l|^{1+\alpha}}{|l|}  + \frac{|k-l-k|^{1+\alpha}}{|k-l|}   \right)\\& \leq \mathcal{\bar{C}}_{\kappa,\lambda,\nu,\mu} \left(    \frac{|k-l|^{1+\alpha}}{|l|}  +|k-l|^{\alpha}+ \frac{|k|^{1+\alpha}}{|k-l|}   \right), \quad |k-l|^2+|l|^2 \geq 1.
\end{align*}
Now, we use this inequality to estimate the second integral in \eqref{aa01}. Let us take the modulus of the term as
\begin{align*}
\Big | \int & \int_{|k-l|^2+|l|^2 \geq 1} m(k-l,l) \hat{u}(k-l)i (k-l)  (il)^{n}  \hat{u}(l)  (1+\nu|k|^{\alpha}) \overline{ (ik)^{n-1} \hat{u}(k)}   dl dk \Big|  \\
&\lesssim \int_{\mathbb{R}}\int_{\mathbb{R}} |\mathcal{\bar{C}}_{\kappa,\lambda,\nu,\mu} (k-l)^{2+\alpha}     \hat{u}(k-l)  l^{n-1}  \hat{u}(l)  (1+\nu|k|^{\alpha}) k^{n-1} \hat{u}(k)| dl dk \\
&\hspace{10pt} + \int_{\mathbb{R}}  \int_{\mathbb{R}} | \mathcal{\bar{C}}_{\kappa,\lambda,\nu,\mu} (k-l)^{1+\alpha}  \hat{u}(k-l)  l^{n}     \hat{u}(l)  (1+\nu|k|^{\alpha}) k^{n-1} \hat{u}(k) | dl dk \\
& \hspace{10pt} +  \int_{\mathbb{R}} \int_{\mathbb{R}} |\mathcal{\bar{C}}_{\kappa,\lambda,\nu,\mu} k^{n+\alpha}       \hat{u}(k-l)  l^{n}    \hat{u}(l)   (1+\nu|k|^{\alpha}) {\hat{u}(k)}|   dl dk\\
&:=A_0^{1,1} +A_0^{1,2} +A_0^{1,3}.
\end{align*}
Applying Cauchy-Schwarz inequality and Sobolev embeddings, one can estimate these terms as follows
\begin{align}\label{a01}
A_0^{1,1}  &= \int_{\mathbb{R}}\int_{\mathbb{R}} |\mathcal{\bar{C}}_{\kappa,\lambda,\nu,\mu} (k-l)^{2+\alpha}     \hat{u}(k-l)  l^{n-1}  \hat{u}(l)  (1+\nu|k|^{\alpha}) k^{n-1} \hat{u}(k)| dl dk        \nonumber \\
	     &\leq  \mathcal{\bar{C}}_{\kappa,\lambda,\nu,\mu} \|  \partial_{xx}D^{\alpha}u \partial_x^{n-1} u \|_{L^2(\mathbb{R})}   \| ( 1+\nu D^{\alpha} )  \partial_x^{n-1}u \|_{L^2(\mathbb{R})}    \nonumber \\
	     &\leq   \mathcal{\bar{C}}_{\kappa,\lambda,\nu,\mu} \| u \|_{H^{2}(\mathbb{R})} \| D^{\alpha}\partial_x^{n-1} u \|_{L^{\infty}(\mathbb{R})}   \| u \|_{H^{n-1+\alpha}(\mathbb{R})}     \nonumber \\
	     &\lesssim \mathcal{\bar{C}}_{\kappa,\lambda,\nu,\mu} \| u \|_{H^{2}(\mathbb{R})}  \| u \|^2_{H^{n}(\mathbb{R})},
\end{align}

\begin{eqnarray}\label{a012}
A_0^{1,2}  &=& \int_{\mathbb{R}}\int_{\mathbb{R}} |\mathcal{\bar{C}}_{\kappa,\lambda,\nu,\mu} (k-l)^{1+\alpha}     \hat{u}(k-l)  l^{n}  \hat{u}(l)  (1+\nu|k|^{\alpha}) k^{n-1} \hat{u}(k)| dl dk        \nonumber \\
	     &\leq&  \mathcal{\bar{C}}_{\kappa,\lambda,\nu,\mu} \|  \partial_{x}D^{\alpha}u ( 1+\nu D^{\alpha} )  \partial_x^{n-1}u       \|_{L^2(\mathbb{R})}   \|   \partial_x^{n} u    \|_{L^2(\mathbb{R})}    \nonumber \\
	     &\leq&   \mathcal{\bar{C}}_{\kappa,\lambda,\nu,\mu}  \|  \partial_{x}D^{\alpha}u \|_{L^{\infty}(\mathbb{R})} \| u \|_{H^{n-1+\alpha}(\mathbb{R})} \|  u \|_{H^{n}(\mathbb{R})} \nonumber \\
	     &\lesssim& \mathcal{\bar{C}}_{\kappa,\lambda,\nu,\mu} \| u \|_{H^{2}(\mathbb{R})}  \| u \|^2_{H^{n}(\mathbb{R})}
\end{eqnarray}
 and
\begin{align} \label{a013}
A_0^{1,3}  &= \int_{\mathbb{R}}\int_{\mathbb{R}} |\mathcal{\bar{C}}_{\kappa,\lambda,\nu,\mu} k^{n+\alpha}  \hat{u}(k-l)  l^{n}  \hat{u}(l)  (1+\nu|k|^{\alpha}) \hat{u}(k)| dl dk        \nonumber \\
	      &=  \mathcal{\bar{C}}_{\kappa,\lambda,\nu,\mu}  \int_{\mathbb{R}}\int_{\mathbb{R}} |  (k-l+l)^{1+\alpha}\hat{u}(k-l)  l^{n}  \hat{u}(l)  (1+\nu|k|^{\alpha})k^{n-1} \hat{u}(k)| dl dk \nonumber \\
	     &\leq \mathcal{\bar{C}}_{\kappa,\lambda,\nu,\mu}  \int_{\mathbb{R}}\int_{\mathbb{R}} |  |k-l|^{1+\alpha} \hat{u}(k-l)  l^{n}  \hat{u}(l)  (1+\nu|k|^{\alpha})k^{n-1} \hat{u}(k)| dl dk   \nonumber \\
	     &+\mathcal{\bar{C}}_{\kappa,\lambda,\nu,\mu}  \int_{\mathbb{R}} \int_{\mathbb{R}}   |\hat{u}(k-l)  |l|^{n+1+\alpha}  \hat{u}(l)  (1+\nu|k|^{\alpha})k^{n-1} \hat{u}(k)| dl dk    \nonumber \\
	     &\leq  \mathcal{\bar{C}}_{\kappa,\lambda,\nu,\mu}  \Big(  \|  \partial_{x}D^{\alpha}u \partial_x^{n} u \|_{L^2(\mathbb{R})}   \| ( 1+\nu D^{\alpha} )  \partial_x^{n-1}u \|_{L^2(\mathbb{R})} \nonumber \\ 
               & \hspace{140pt} +  \| u \partial_{x}D^{\alpha} \partial_x^{n} u \|_{L^2(\mathbb{R})}   \| ( 1+\nu D^{\alpha} )  \partial_x^{n-1}u \|_{L^2(\mathbb{R})}  \Big)  \nonumber \\
	    &\leq \mathcal{\bar{C}}_{\kappa,\lambda,\nu,\mu} \Big( \| u \|_{H^{1+\alpha}(\mathbb{R})}  \| u \|_{H^{n}(\mathbb{R})}  \| u \|_{H^{n-1+\alpha}(\mathbb{R})} \nonumber \\  &\hspace{140pt}  +    \| u \|_{H^{1+\alpha}(\mathbb{R})}  \| u \|_{H^{n}(\mathbb{R})} \| u \|_{H^{n-1+\alpha}(\mathbb{R})}   \Big) \nonumber \\
	    &\lesssim \mathcal{\bar{C}}_{\kappa,\lambda,\nu,\mu} \| u \|_{H^{2}(\mathbb{R})}  \| u \|^2_{H^{n}(\mathbb{R})}.
\end{align}
Thus, the inequalties \eqref{a01}, \eqref{a012} and \eqref{a013}  yield that
\begin{equation}\label{a1}
|A_0^1| \lesssim \mathcal{\bar{C}}_{\kappa,\lambda,\nu,\mu}  \| u \|_{H^{2}(\mathbb{R})}  \| u \|^2_{H^{n}(\mathbb{R})}.
\end{equation}
The second step is devoted to the estimation of $A_0^2.$
$$
A_0^2=- \int_{\mathbb{R}}\int_{\mathbb{R}} m(k-l,l) \hat{u}(k-l)(il)^{n+1}  \hat{u}(l)  (1+\nu|k|^{\alpha}) \overline{ (ik)^{n-1} \hat{u}(k)}   dl dk$$
First, we replace $-(k,l)$ by $(k,l)$
$$
A_0^2=- \int_{\mathbb{R}}\int_{\mathbb{R}} m(k-l,l) \hat{u}(l-k)(ik)^{n-1}  \hat{u}(k)  (1+\nu|k|^{\alpha}) \overline{ (il)^{n+1} \hat{u}(l)}   dl dk.$$
Then, we use the change of variable from $(k,l)$ to $(l,k)$ and the relation $$ m(k-l,l)l (1+\nu|k|^{\alpha}) + m(l-k,k)k(1+ \nu |l|^{\alpha})=0,$$
respectively, so that
\begin{align} \label{a2a1}
A_0^2&=- \int_{\mathbb{R}}\int_{\mathbb{R}} m(l-k,k) \hat{u}(k-l)(il)^{n-1}  \hat{u}(l)  (1+\nu|l|^{\alpha}) \overline{ (ik)^{n+1} \hat{u}(k)}   dl dk, \nonumber \\
&= -\int_{\mathbb{R}}\int_{\mathbb{R}} m(k-l,l) \hat{u}(k-l)(il)^{n}  \hat{u}(l)  (1+\nu|k|^{\alpha}) \overline{ (ik)^{n} \hat{u}(k)}   dl dk, \nonumber \\
&= -A_0= A_0^1-A_0^2,
\end{align}
which implies $2 A_0=A_0^1$.
Now, it remains to estimate the terms $A_j$ that can be given by
\begin{align*}
A_j= \int_{\mathbb{R}}\int_{\mathbb{R}} m(k-l,l) [i(k-l)]^j\hat{u}(k-l)(il)^{(n-j)}  \hat{u}(l)  (1+&\nu|k|^{\alpha}) \overline{ (ik)^{n} \hat{u}(k)}  dl dk, \\
  &\mbox{for} \quad  j=1,...,n-1.
\end{align*}
We follow the similar approach as for $A_0.$ Here, we focus on the high frequencies singularities in $m$ and use the  \eqref{m_in2}.
\begin{align*}
\Big| \int  & \int_{|k-l|^2+|l|^2 \geq 1}  \hspace{-20pt}  m(k-l,l) [i(k-l)]^j\hat{u}(k-l)(il)^{(n-j)}  \hat{u}(l)  (1+\nu|k|^{\alpha}) \overline{ (ik)^{n} \hat{u}(k)}   dl dk    \Big| \\
&\hspace{10pt} \lesssim \mathcal{\bar{C}}_{\kappa,\lambda,\nu,\mu}\int_{\mathbb{R}}\int_{\mathbb{R}} | (k-l)^{1+\alpha +j}     \hat{u}(k-l)  l^{n-1-j}  \hat{u}(l)  (1+\nu|k|^{\alpha}) k^{n} \hat{u}(k)| dl dk \\
& \hspace{25pt} +  \mathcal{\bar{C}}_{\kappa,\lambda,\nu,\mu}\int_{\mathbb{R}}  \int_{\mathbb{R}} | (k-l)^{j-1}  \hat{u}(k-l)  l^{n+1+\alpha-j}     \hat{u}(l)  (1+\nu|k|^{\alpha}) k^{n} \hat{u}(k) | dl dk \\
&\hspace{10pt}  :=A_j^{1} +A_j^{2}.
\end{align*}
We  can estimate these terms as in the following.
\begin{align}\label{aj1}
A_j^{1}  = \mathcal{\bar{C}}_{\kappa,\lambda,\nu,\mu}\iint_{\mathbb{R}^2} \hspace{-3pt}  | (k-l)^{1+\alpha +j}     \hat{u}(k-l)  l^{n-1-j}  \hat{u}(l) (1+\nu|k-l+l|^{\alpha/2}|k|^{\alpha/2})&\nonumber \\
 \times   k^{n} \hat{u}(k)| dl dk&      \nonumber \\
= \mathcal{\bar{C}}_{\kappa,\lambda,\nu,\mu}  \iint_{\mathbb{R}^2} \hspace{-3pt} | (k-l)^{1+\alpha +j} (1+\nu|k-l|^{\alpha/2})     \hat{u}(k-l)  l^{n-1-j}  \hat{u}(l)  (1+\nu|k|^{\alpha/2}) 
&\nonumber \\
 \times k^{n} \hat{u}(k)| dl dk     & \nonumber \\
 + \mathcal{\bar{C}}_{\kappa,\lambda,\nu,\mu}\iint_{\mathbb{R}^2} \hspace{-3pt}  | (k-l)^{1+\alpha +j}     \hat{u}(k-l)  l^{n-1-j} (1+\nu|l|^{\alpha/2})  \hat{u}(l)  (1+\nu|k|^{\alpha/2})
&\nonumber \\
 \times  k^{n} \hat{u}(k)| dl dk  &   \nonumber \\
\leq  \mathcal{\bar{C}}_{\kappa,\lambda,\nu,\mu} \|  D^{\alpha}\partial_x^{j+1}(1+\nu D^{\alpha/2})  u \partial_x^{n-1-j} u \|_{L^2}   \| ( 1+\nu D^{\alpha/2} )  \partial_x^{n}u \|_{L^2} \hspace{50pt}  &  \nonumber \\
+\mathcal{\bar{C}}_{\kappa,\lambda,\nu,\mu}  \|  D^{\alpha}\partial_x^{j+1}  u \partial_x^{n-1-j}   (1+\nu D^{\alpha/2})u \|_{L^2}   \| ( 1+\nu D^{\alpha/2} )  \partial_x^{n}u \|_{L^2} \hspace{40pt} & \nonumber \\
= \mathcal{\bar{C}}_{\kappa,\lambda,\nu,\mu} \|  \partial_x^{j+1}(1+\nu D^{\alpha/2})  u D^{\alpha}  \partial_x^{n-1-j} u \|_{L^2}   \| ( 1+\nu D^{\alpha/2} )  \partial_x^{n}u \|_{L^2} \hspace{50pt} & \nonumber\\
+  \mathcal{\bar{C}}_{\kappa,\lambda,\nu,\mu}   \|  \partial_x^{j+1}  u D^{\alpha} \partial_x^{n-1-j}  (1+\nu D^{\alpha/2})u \|_{L^2}   \| ( 1+\nu D^{\alpha/2} )  \partial_x^{n}u \|_{L^2} \hspace{40pt} & \nonumber \\
\lesssim   \mathcal{\bar{C}}_{\kappa,\lambda,\nu,\mu} \left( \| u \|_{H^{n-1}(\mathbb{R})} \|  u \|^2_{H^{n+\alpha/2}(\mathbb{R})}  + \| u \|_{H^{n-1}(\mathbb{R})} \|  u \|^2_{H^{n+\alpha/2}(\mathbb{R})} \right),\hspace{50pt}&
\end{align}
where we have used Plancherel's identity in the fourth step and in a similar manner 
\begin{equation}\label{aj2}
A_j^{2} \lesssim \mathcal{\bar{C}}_{\kappa,\lambda,\nu,\mu} \left( \| u \|_{H^{n}(\mathbb{R})} \|  u \|^2_{H^{n+\alpha/2}(\mathbb{R})}  + \| u \|_{H^{n-1}(\mathbb{R})} \|  u \|^2_{H^{n+\alpha/2}(\mathbb{R})} \right).
\end{equation}
By using \eqref{aj1} and \eqref{aj2} one can deduce that these terms are bounded by
\begin{equation}\label{aj}
|A_j| \lesssim  \mathcal{\bar{C}}_{\kappa,\lambda,\nu,\mu} \epsilon  \| u \|^2_{H^{n+\alpha/2}(\mathbb{R})},\quad \quad \quad 1 \leq j \leq n-1.
\end{equation}
Combining the inequalities \eqref{a1},   \eqref{a2a1} and \eqref{aj} for $1 \leq n \leq N$, we get \eqref{En} as long as $\|  u \|_{H^{N+\alpha/2}(\mathbb{R})} < \epsilon$. This completes the proof of lemma.
\end{proof}

\subsection{The Energy Estimates}
\begin{proposition}\label{proposition}
Let $n \geq 1$. Then
$$ \frac{d}{dt}E_n(t) \lesssim \|u\|^2_{H^{2+\alpha/2}(\mathbb{R})} \|u\|^2_{H^{n+\alpha/2}(\mathbb{R})} + \|u\|^4_{H^{n}(\mathbb{R})}.$$
\end{proposition}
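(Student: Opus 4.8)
To prove Proposition \ref{proposition} the plan is to differentiate $E_n$ in time, insert the equations so that all linear and quadratic contributions disappear, and estimate the surviving cubic-in-$P$ (equivalently quartic or higher in $u$) remainder by means of the symbol bounds \eqref{m_in1}--\eqref{m_in2} and Sobolev calculus. Concretely, by the identity \eqref{parenergy} together with \eqref{nparenergy},
\[ E_n(t) = \| \partial_x^n(1+\nu D^{\alpha})^{1/2} w \|_{L^2}^2 - \| \partial_x^n(1+\nu D^{\alpha})^{1/2} P(u,u)\|_{L^2}^2, \]
where $w=u+P(u,u)$ solves the cubic equation \eqref{transform2}. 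From \eqref{transform2} one has $(1+\nu D^\alpha)w_t=-\kappa w_x+\mu D^\alpha w_x+R(u)$; since $\partial_x$ and $D^\alpha\partial_x$ are skew-adjoint on $L^2(\mathbb{R})$ and commute with $(1+\nu D^\alpha)^{\pm1/2}$, the linear terms contribute nothing, so, writing $w=u+P(u,u)$,
\[ \tfrac{d}{dt}\| \partial_x^n(1+\nu D^{\alpha})^{1/2} w \|_{L^2}^2 = 2\big\langle \partial_x^n(1+\nu D^\alpha)^{-1/2} R(u),\ \partial_x^n(1+\nu D^\alpha)^{1/2}u\big\rangle + 2\big\langle \partial_x^n(1+\nu D^\alpha)^{-1/2} R(u),\ \partial_x^n(1+\nu D^\alpha)^{1/2}P(u,u)\big\rangle . \]
As $R(u)$ is cubic in $u$, the right-hand side is a sum of a quartic and a quintic expression.

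For the correction term, the symmetry of $P$ gives $\partial_t P(u,u)=2P(u_t,u)$, and substituting $u_t=-(1+\nu D^\alpha)^{-1}\big(\kappa u_x+\lambda uu_x-\mu D^\alpha u_x\big)$ from \eqref{fKdVBBM} splits $\partial_t P(u,u)$ into a part quadratic in $u$ (coming from the linear part of $u_t$) and a part cubic in $u$ (coming from $\lambda uu_x$). Hence $\tfrac{d}{dt}\| \partial_x^n(1+\nu D^{\alpha})^{1/2} P(u,u)\|_{L^2}^2=4\langle \partial_x^n(1+\nu D^\alpha)^{1/2}P(u_t,u),\ \partial_x^n(1+\nu D^\alpha)^{1/2}P(u,u)\rangle$ is again a sum of a quartic and a quintic expression. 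Thus $\tfrac{d}{dt}E_n$ contains no cubic term — this is precisely the effect for which the modified energy was designed — and it remains only to bound the higher-order terms.

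For the estimates I would rewrite each inner product on the Fourier side, exactly as in the proof of the preceding Lemma, as a double integral against $m(k-l,l)$ together with the symbols $(1+\nu|\cdot|^\alpha)^{\pm1}$ and polynomial weights in $k-l$, $l$, $k$, then split the plane into $\{(k-l)^2+l^2\le1\}$ and $\{(k-l)^2+l^2\ge1\}$ and apply the respective bound in \eqref{m_in1}--\eqref{m_in2}. On the low-frequency region the polynomial prefactors cancel the $|l|^{-1}$ and $|k-l|^{-1}$ singularities of $m$; on the high-frequency region, whenever a top-order derivative $\partial_x^{n+1}$ falls on the factor carrying the largest frequency, I would symmetrize by means of the relation $m(k-l,l)\,l\,(1+\nu|k|^\alpha)+m(l-k,k)\,k\,(1+\nu|l|^\alpha)=0$, exactly as in the computation $A_0=A_0^1+A_0^2$, $2A_0=A_0^1$ above, to trade it for $\partial_x^{n-1}$. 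After this reduction, Cauchy--Schwarz, Plancherel's identity and the Sobolev embedding $H^s(\mathbb{R})\hookrightarrow L^\infty(\mathbb{R})$ for $s>1/2$ place the two most differentiated factors in $H^{n+\alpha/2}$ and the remaining low-order factors in $L^\infty$ (costing at most two derivatives plus an $\alpha/2$-weight), so that the quartic terms are controlled by $\|u\|^2_{H^{2+\alpha/2}}\|u\|^2_{H^{n+\alpha/2}}$, the genuinely lower-order quartic pieces (derivatives spread out) by $\|u\|^4_{H^{n}}$, and the quintic terms by either of these after extracting a factor $\|u\|_{H^{N+\alpha/2}}\lesssim1$; here the hypothesis $N\ge2$ is used to have $H^{2+\alpha/2}$ available.

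The decisive term is the quartic one produced by $R(u)$ against $\partial_x^n(1+\nu D^\alpha)^{1/2}u$, namely, up to constants and conjugation,
\[ \big\langle \partial_x^n(1+\nu D^\alpha)^{1/2}\big[P(g,u)+P(u,g)\big],\ \partial_x^n(1+\nu D^\alpha)^{1/2}u\big\rangle, \qquad g:=(1+\nu D^\alpha)^{-1}\lambda uu_x , \]
in which up to $2n$ derivatives, together with the $|k|^{1+\alpha}$-growth of $m$ at high frequency, must be distributed over the factors without ever loading more than $n+\alpha/2$ derivatives on an $L^2$-factor. This is the analogue of the term $A_0$ from the Lemma, but with the smoothed quadratic $g$ in place of a bare $u$: the symmetrization above removes the excess derivative, while the internal structure of $g$ is handled by putting $u\in L^\infty$ and noting that $(1+\nu D^\alpha)^{-1}\partial_x$ loses only $1-\alpha<1$ derivatives, a loss absorbed by the smallness of $\|u\|_{H^{N+\alpha/2}}$. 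All the other terms are strictly easier and are dispatched by the same routine product inequalities.
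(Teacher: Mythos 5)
The paper itself offers no written-out argument for Proposition \ref{proposition}: it simply defers to the proof of Proposition 4.1 in \cite{nilsson}. Your skeleton --- differentiate the modified energy, kill the linear terms by skew-adjointness, let the choice of $m$ remove the cubic terms, and estimate the remainder on the Fourier side using \eqref{m_in1}--\eqref{m_in2}, the relation $m(k-l,l)\,l\,(1+\nu|k|^{\alpha})+m(l-k,k)\,k\,(1+\nu|l|^{\alpha})=0$, Plancherel and Sobolev embedding --- is exactly the method that citation stands for, and your identity $E_n=\|\partial_x^n(1+\nu D^{\alpha})^{1/2}w\|_{L^2}^2-\|\partial_x^n(1+\nu D^{\alpha})^{1/2}P(u,u)\|_{L^2}^2$ together with the computation of the two time derivatives is correct.

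There is, however, one step that fails as written. The terms you set aside as ``strictly easier'' or to be handled ``after extracting a factor $\|u\|_{H^{N+\alpha/2}}\lesssim1$'' are precisely the pairings in which $\partial_x^n(1+\nu D^{\alpha})^{1/2}$ lands on $P(u,u)$ as a whole: writing $\Lambda=(1+\nu D^{\alpha})^{1/2}$, $Lu=-(1+\nu D^{\alpha})^{-1}(\kappa-\mu D^{\alpha})u_x$ and $N(u)=-(1+\nu D^{\alpha})^{-1}\lambda uu_x$, these are the quartic term $\langle\Lambda\partial_x^nP(Lu,u),\Lambda\partial_x^nP(u,u)\rangle$ produced by the correction piece and the two quintic terms. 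By \eqref{m_last}, $m(k-l,l)$ grows like $|k|$ when $|k-l|\lesssim 1\ll|l|$, so $\Lambda\partial_x^nP(u,u)$ loses a full derivative relative to $u\in H^{n+\alpha/2}$ --- this is the very unboundedness of the normal form that forces the modified-energy approach --- and each of these pairings therefore carries roughly a two-derivative excess at high frequency; no H\"older placement can absorb it, and smallness of $\|u\|_{H^{N+\alpha/2}}$ controls size, never regularity. The remedy is a cancellation already implicit in your decomposition: since $(1+\nu D^{\alpha})^{-1/2}R(u)=2\Lambda P(N(u),u)$, the two quintic contributions are identical and cancel in $\tfrac{d}{dt}E_n$; and the defining relation for $m$ gives $2P(Lu,u)+N(u)=L\,P(u,u)$, whose pairing with $\Lambda\partial_x^nP(u,u)$ vanishes by the same skew-adjointness you used for the linear terms, leaving $\langle\Lambda\partial_x^nN(u),\Lambda\partial_x^nP(u,u)\rangle$, which then requires the same symmetrization and frequency-splitting machinery as your ``decisive'' term rather than routine product inequalities. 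This cancellation is also why the proposition can be stated without any smallness hypothesis; and, as a small point, the $1-\alpha$ derivative loss of $(1+\nu D^{\alpha})^{-1}\partial_x$ is harmless because $N\geq2$ leaves regularity to spare, not because the norm of $u$ is small.
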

\begin{proof}
We note that the proof is guaranteed by the proof of Proposition 4.1 in \cite{nilsson} taking into account the different constants.
\end{proof}
As the last step, we now propose the proof of our main theorem.
\begin{proof}[Proof of Theorem \ref{theorem}]
First we note that the $L_2-$term in the modified energy is conserved in time such that
$$ \| (1+\nu D^{\alpha}) ^{1/2}u(t) \|_{L^2(\mathbb{R})}^2 = \| (1+\nu D^{\alpha}) ^{1/2}u_0 \|_{L^2(\mathbb{R})}^2.$$
Then, summation over $n$ from $1$ to $N$ in the Proposition \ref{proposition} yields 
\begin{equation*}
\frac{d}{dt} E^{(N)}(t) \lesssim \| u(t) \|_{H^{N+\alpha/2}(\mathbb{R})}^4,
\end{equation*}
which implies 
\begin{align*}
\sum_{n=1}^{N} E_{n}(t) +\| (1+\nu D^{\alpha}) ^{1/2}u(t) \|_{L^2(\mathbb{R})}^2           \lesssim \sum_{n=1}^{N} E_{n}(0)   + \| (1+\nu D^{\alpha}) ^{1/2}u_0\|_{L^2(\mathbb{R})}^2  \\ +   \int_0^t   \| u(\bar{\tau},.) \|_{H^{N+\alpha/2}(\mathbb{R})}^4 d\bar{\tau}.
\end{align*}
Since the modified energy is almost equivalent to the $H^{N+\alpha/2}-$ energy such that 
$$E^{(N)}(t)  \simeq   \|  u \| ^2_{H^{N+\alpha/2}(\mathbb{R})},$$
then  we obtain 
\begin{equation*}
         \|  u(t) \| ^2_{H^{N+\alpha/2}(\mathbb{R})}  \lesssim \|  u_0 \| ^2_{H^{N+\alpha/2}(\mathbb{R})} +   \int_0^t   \| u(\bar{\tau},.) \|_{H^{N+\alpha/2}(\mathbb{R})}^4 d\bar{\tau}.
\end{equation*}
By using Grönwall's inequality, one can show that the local existence time of unique solution $u \in {C}([0,T]; H^{N+\alpha/2}(\mathbb{R}))$ can be extended until the time $T$, satisfies $ \frac{1}{\epsilon^2} \lesssim  T.$

\end{proof}

\section{Numerical Results}
In this section, we propose a pseudospectral method for the numerical integration of the fractional KdV-BBM equation \eqref{fKdVBBM}.
The spatial derivatives are computed spectrally  and the time integration carried out by using fourth order Runge-Kutta scheme. We refer the reader to \cite{oruc} for more details about the implementation of the method. 

Our main result, Theorem \ref{theorem}, guarantees that the maximal existence time satisfies $ \frac{1}{\epsilon^2} \lesssim  T.
$ On the other hand, existence of a solution beyond $ \frac{1}{\epsilon^2} $ remains as open question because the global theory is not available yet for the fractional type equations. Therefore, we carry out several numerical experiments in order to investigate $i)$ the qualitative behaviour of the solutions if it is global, $ii)$ blow-up issues,
$iii)$ dependence of the existence time on the initial data, and $iv)$ the long time behavior of small initial data.

In the numerical experiments we assume  $\kappa= \lambda= \mu =\nu =1 $ and consider the truncated domain $(x,t) \in [-L,L] \times [0,T_{max}]$ with periodic boundary conditions
$u(-L,t)=u(L,t)$. Inspired by the existence of an exact solution, we consider the initial guess in our experiments for $\alpha<1$ as follows
$$ u_0(x)=\delta \operatorname{sech}^2(x), $$
where $\delta$ is a real constant. 

In the first numerical experiment we consider the interval $\frac{1}{3}< \alpha < 1$  and study the time evolution of a large amplitude solution with initial guess $u_0=20\operatorname{sech}^2(x)$ and $\alpha=0.5$. We choose space interval $20[-\pi,\pi]$ with grid points $N=2^{14}$ and final time $T_{max}=5$ with time step $N_t=5 \times 10^{4}.$  In Figure \ref{solitary1}, we observe that the initial hump with large amplitude decomposes into two humps after a certain time. Later, these two humps continue to propagate without changing both their shapes and speeds, which is an indication of soliton type wave appearence. The oscillations propagate to the left unlike humps show the radiation of the remaining energy. This is similar to what has been found for the solutions to the fractional KdV equation when $\alpha > \frac{1}{2}$ and to the fractional BBM equation when  $\alpha > \frac{1}{3}$ in \cite{klein}. 

\begin{figure}[ht!]
 \centering
 \includegraphics[width=4.1in]{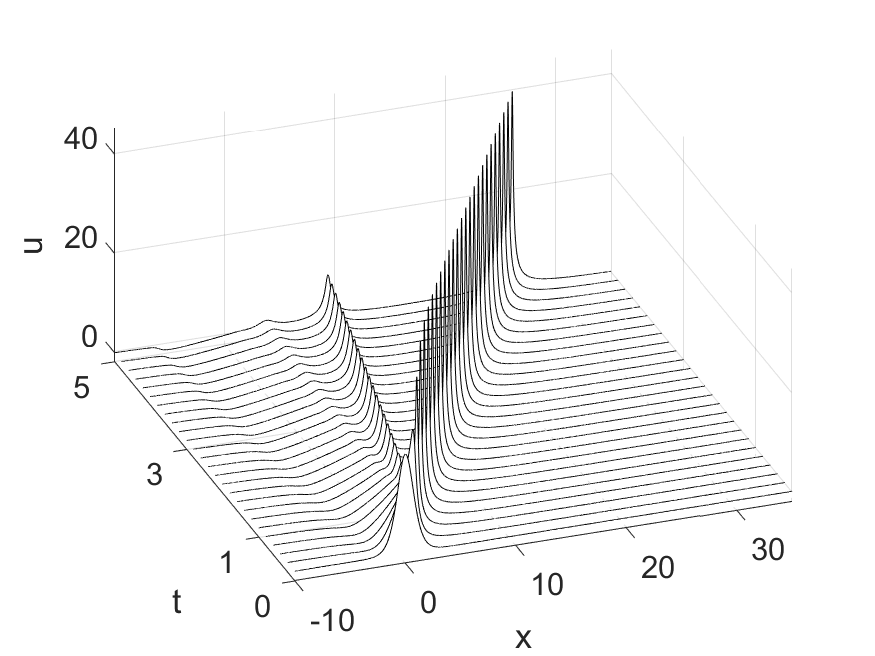}
  \caption{The propagation of solitary wave to the fractional KdV-BBM equation \eqref{fKdVBBM} for $\alpha=0.5$.}
 \label{solitary1}
\end{figure}

In Figure \ref{energy}, one can see the increase in the amplitude due to steepness of the solution and it becomes constant after some time. The same figure shows that the relative energy is conserved up to order of $10^{-6}$ and the solution is well resolved in Fourier space. So the numerical result is reliable and this case does not imply blow-up in the solution.
\begin{center}
\begin{figure}[ht!] 
\begin{minipage}[t]{0.35\linewidth}
   \includegraphics[width=2.5in]{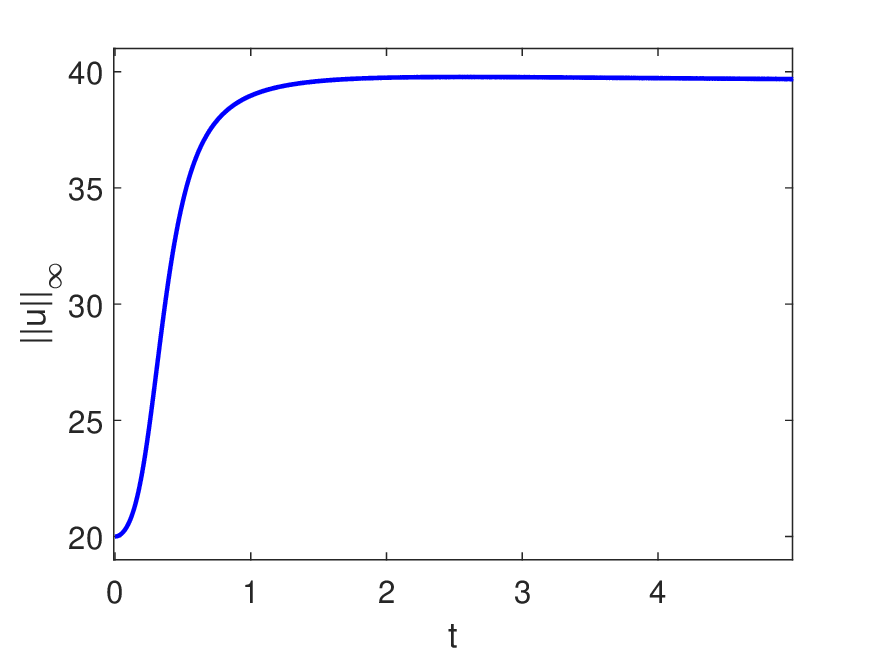}
 \end{minipage}
 \hspace{30pt}
 \begin{minipage}[t]{0.35\linewidth}
   \includegraphics[width=2.5in]{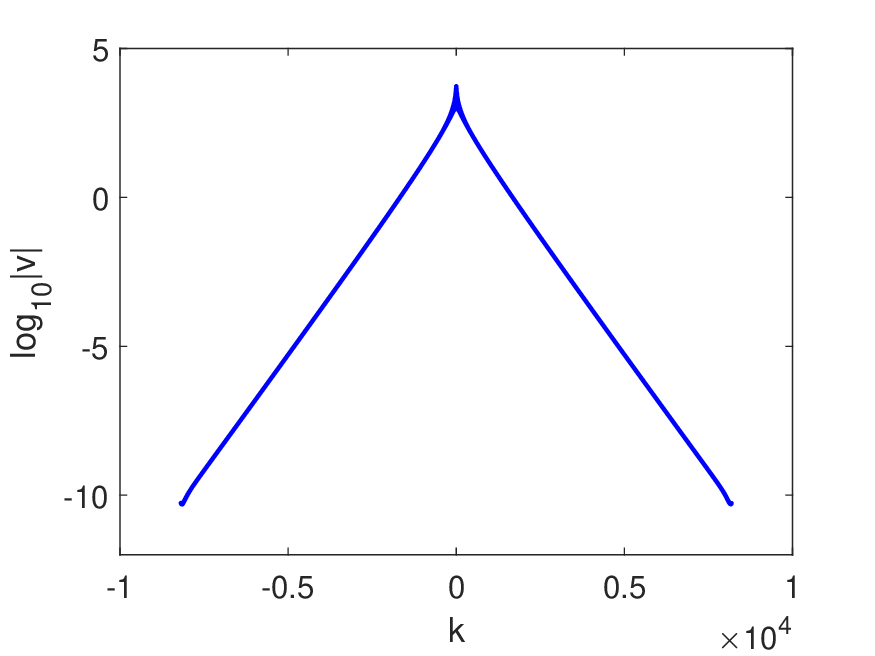}
 \end{minipage}
\center
 \hspace{-55pt} 
\begin{minipage}[t]{0.35\linewidth}
   \includegraphics[width=2.5in]{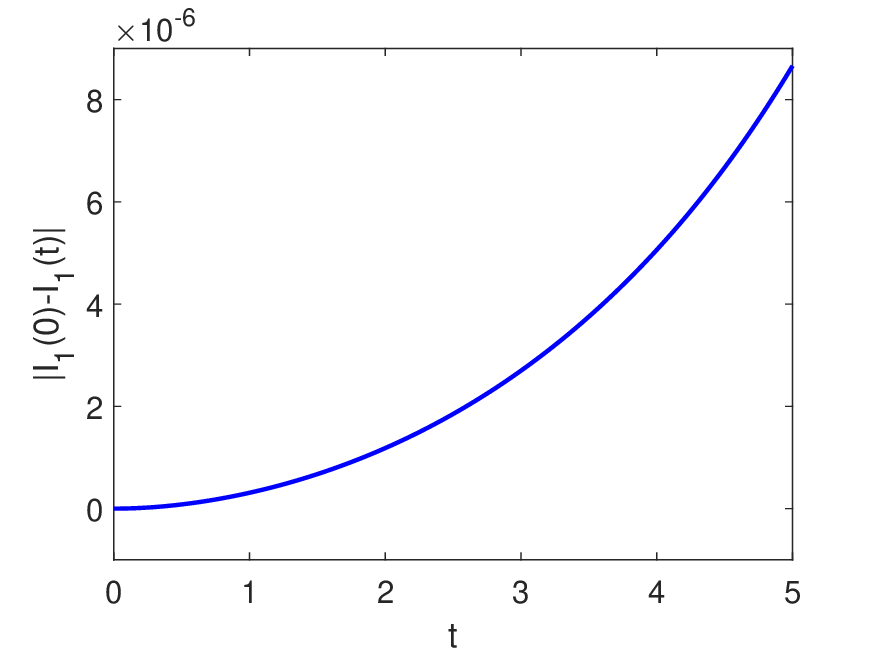}
 \end{minipage}
  \caption{ Variation in the $L_{\infty}$ norm of the solution to the fractional KdV-BBM equation \eqref{fKdVBBM} for $\alpha=0.5$ and $\delta=20$  (upper left panel),  modulus of the Fourier coefficients for the corresponding solution  (upper right panel) and variation of the energy identiy $I_1$ in time (bottom panel)}
  \label{energy}
\end{figure}
\end{center}

In the second numerical experiment, we consider $\alpha=0.9 > \frac{1}{3} $ and study a small amplitude solution with initial guess $u_0=0.1\operatorname{sech}^2(x)$. We choose the space interval $100[-\pi,\pi]$ with grid points $N=2^{15}$ and final time $T_{max}=50$ with time step $N_t=10^{5}.$  In Figure \ref{a09}, the numerical solution at the final time is depicted in the upper left panel and it is simply radiated away to infinity. In the right panel of the same figure,  $L_{\infty}$ norm of the numerical solution decreases with time. The bottom panel shows the well resolution of the solution in Fourier space. This experiment also does not indicate the existence of blow-up.

\begin{center}
\begin{figure}[ht!] 
\begin{minipage}[t]{0.35\linewidth}
   \includegraphics[width=2.5in]{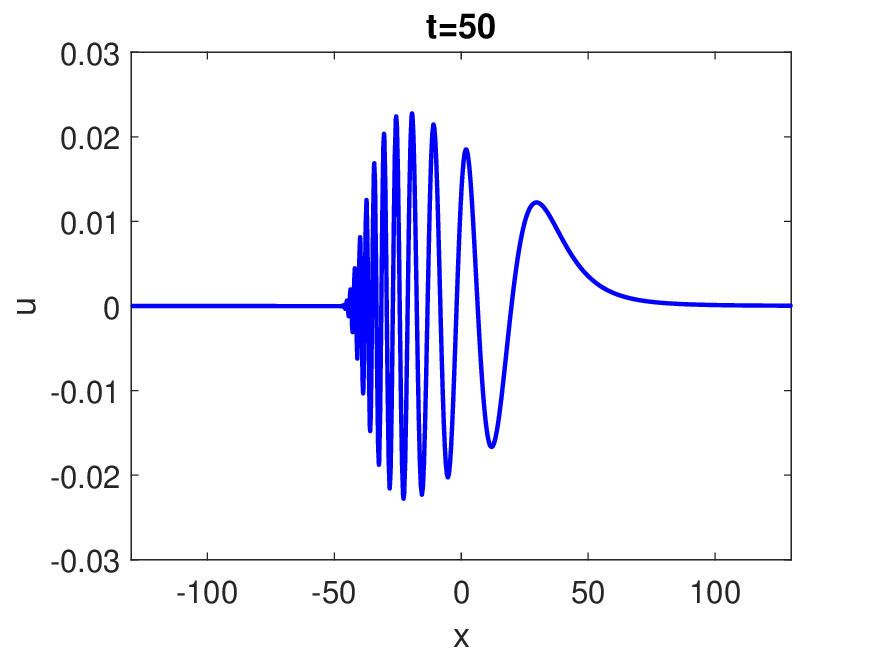}
 \end{minipage}
 \hspace{30pt}
 \begin{minipage}[t]{0.35\linewidth}
   \includegraphics[width=2.5in]{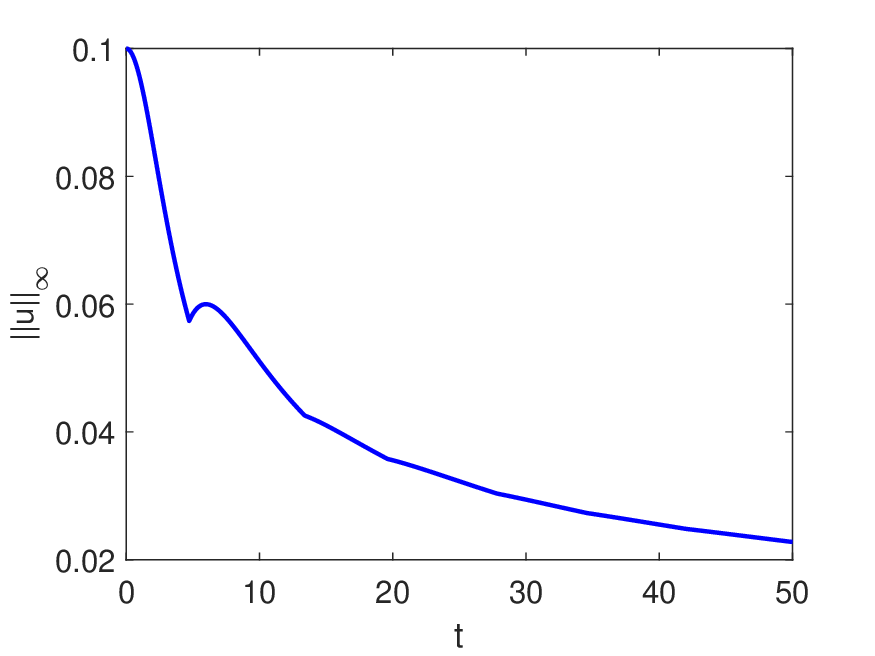}
 \end{minipage}
\center
 \hspace{-55pt} 
\begin{minipage}[t]{0.35\linewidth}
   \includegraphics[width=2.5in]{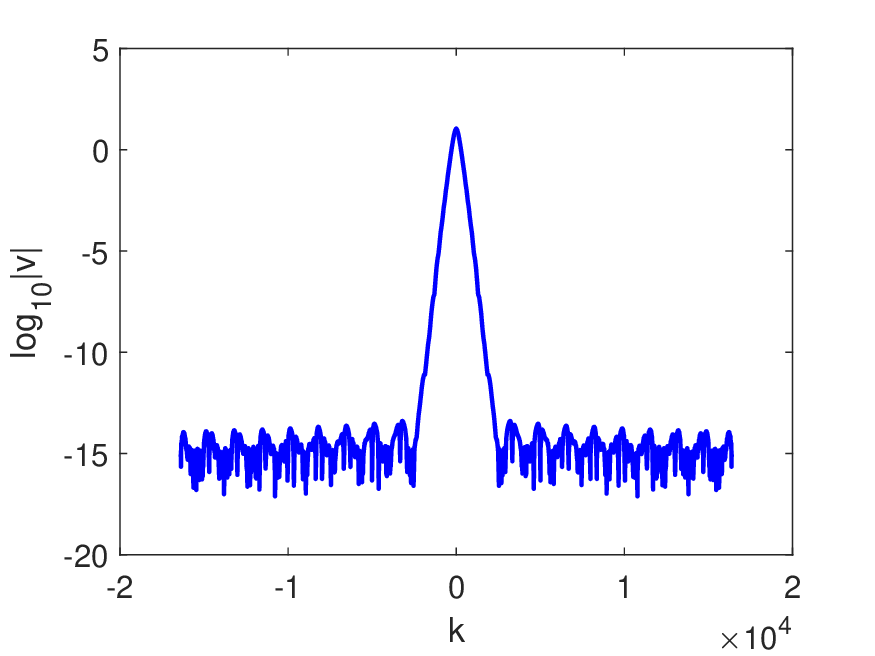}
 \end{minipage}
   \caption{ The profile of the solution to the fractional KdV-BBM equation \eqref{fKdVBBM} for $\alpha=0.9$ and $\delta=0.1$ at t=50 (upper left panel), variation in the $L_{\infty}$ norm of the corresponfding solution (upper right panel),  modulus of the Fourier coefficients for the corresponding solution  (bottom panel) }
  \label{a09}
\end{figure}
\end{center}  


Now, we concentrate on the case $0< \alpha \leq \frac{1}{3}$. In the third experiment, we choose  $\alpha=0.2$, $\delta=1.1$, the space interval $48[-\pi,\pi]$ with grid points $N=2^{16}$ and time step size $\Delta t = 10^{-4}$ until the time at which the relative energy stays below $5*10^{-3}$. Doing so, the code is stopped at time $t=11.527$. The solution profile at several time can be seen in Figure \ref{a03}. According to this numerical experiment, there is an indication of blow-up in the solution. 
\begin{center}
\begin{figure}[ht!]
 \begin{minipage}[t]{0.35\linewidth}
   \includegraphics[width=2.5in]{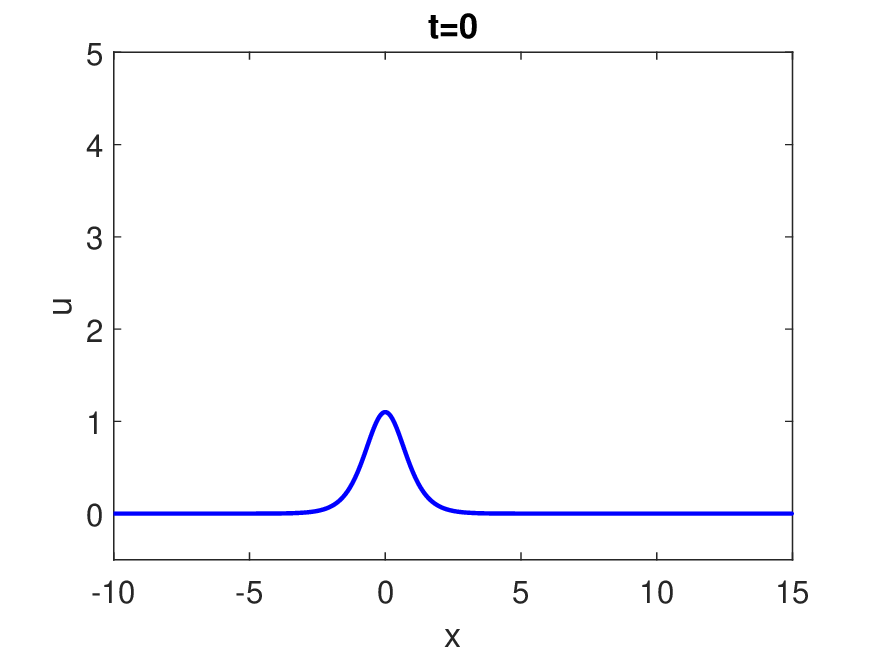}
 \end{minipage}
\hspace{30pt}
\begin{minipage}[t]{0.35\linewidth}
   \includegraphics[width=2.5in]{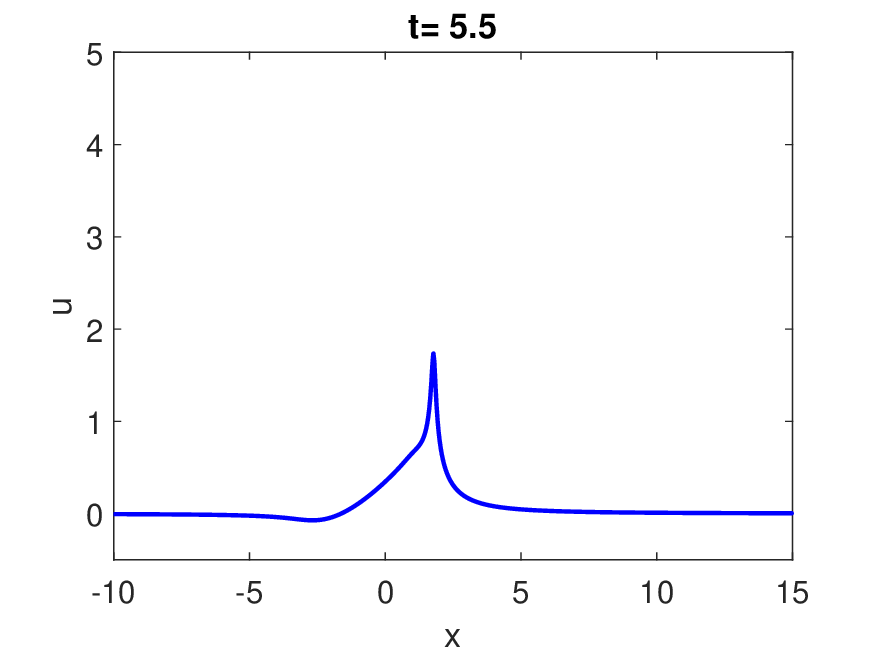}
 \end{minipage}

\begin{minipage}[t]{0.35\linewidth}
   \includegraphics[width=2.5in]{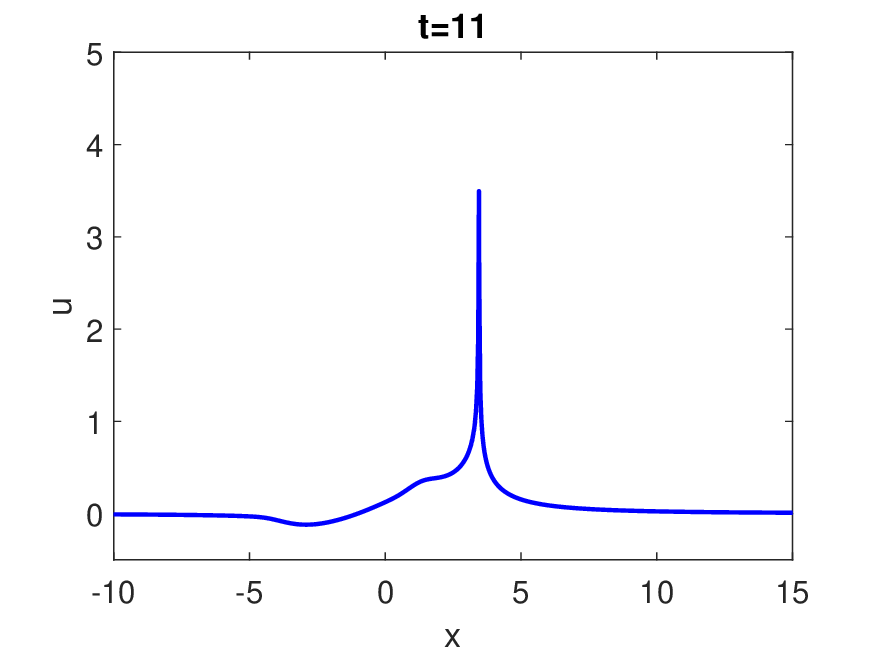}
 \end{minipage}
\hspace{30pt}
\begin{minipage}[t]{0.35\linewidth}
   \includegraphics[width=2.5in]{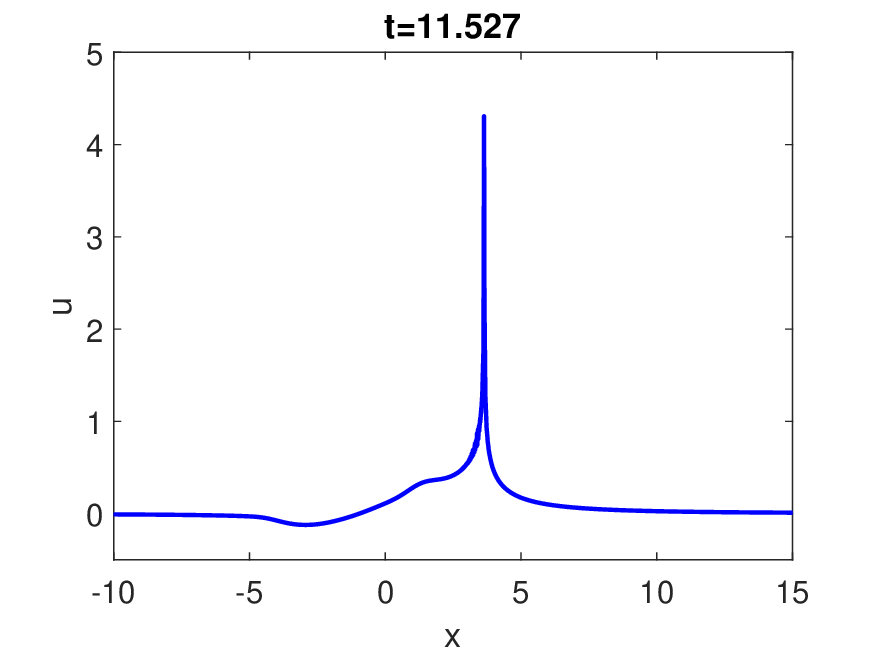}
 \end{minipage}
  \caption{The profile of the solution to the fractional KdV-BBM equation \eqref{fKdVBBM} for $\alpha=0.2$ and $\delta=1.1$ at several times}
 \label{a03}
\end{figure}
\end{center}
In the left panel of Figure \ref{blowuptime}, the variation of the $L_{\infty}$ norm of the corresponding solution is presented.  Although the initial guess is not sufficiently small as assumed in Theorem \ref{theorem}, numerical results indicate the existence of solutions until the time $\frac{1}{\delta^2}$ and after that time the solution blows up. In the right panel, we depict the resolution of the solution in the Fourier space at the final time.

\begin{figure}[ht!]
 \begin{minipage}[t]{0.35\linewidth}
   \includegraphics[width=2.5in]{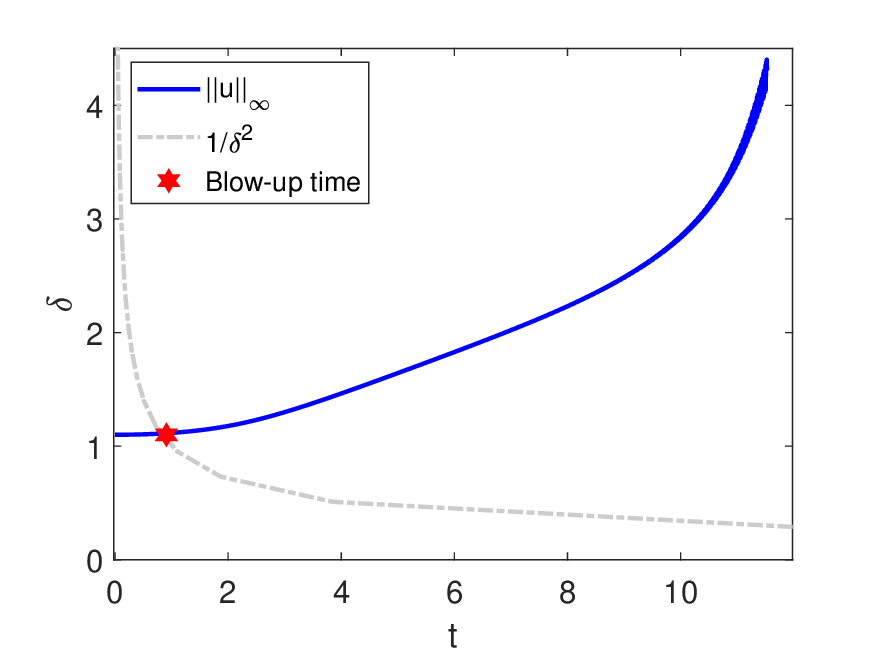}
 \end{minipage}
 \hspace{30pt}
 \begin{minipage}[t]{0.35\linewidth}
   \includegraphics[width=2.5in]{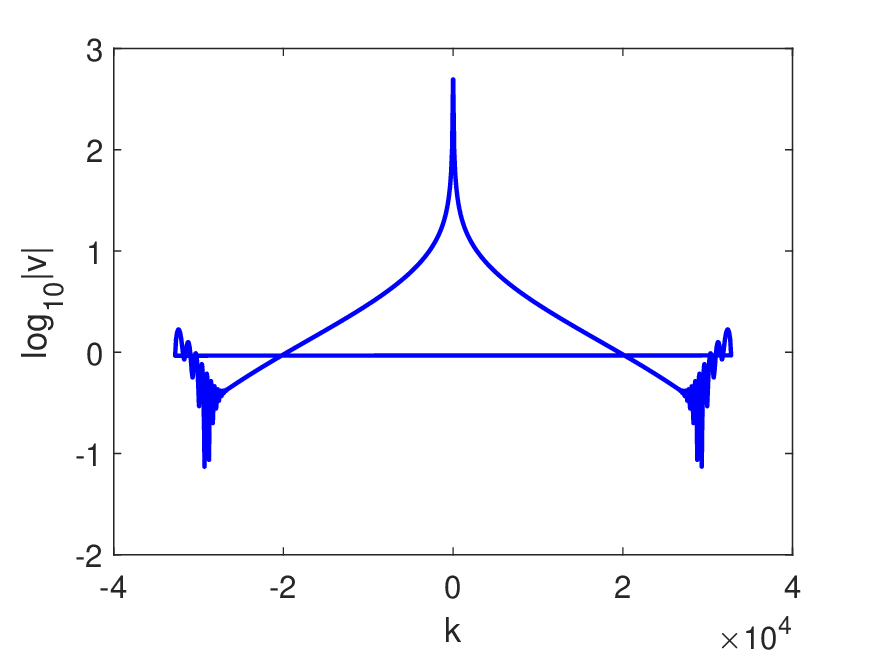}
 \end{minipage}
  \caption{The variation in the $L_{\infty}$ norm of the wave profile of the fractional KdV-BBM equation \eqref{fKdVBBM} for $\alpha=0.2$ and $\delta=1.1$ with time 
(left panel),   modulus of the Fourier coefficients for the corresponding solution  (right panel) }
  \label{blowuptime}
\end{figure}

In the last experiment, we choose $\alpha=0.2< \frac{1}{3}$ and small initial data such as $\delta=0.1.$ We consider the same time and space intervals also with the same discretization as in the second experiment (Figure \ref{a09}). We show the profile at final time in the upper left panel of Figure \ref{a02} and there is no indication of blow-up because the solution is clearly radiated away to infinity in this case. In the right panel of the same figure,  $L_{\infty}$ norm of the numerical solution decreases with time. The bottom panel shows the resolution of the solution in Fourier space.  Moreover, the energy is preserved up to order $10^{-12}$  in this experiment.
 
\begin{center}
\begin{figure}[ht!]
 \begin{minipage}[t]{0.35\linewidth}
   \includegraphics[width=2.5in]{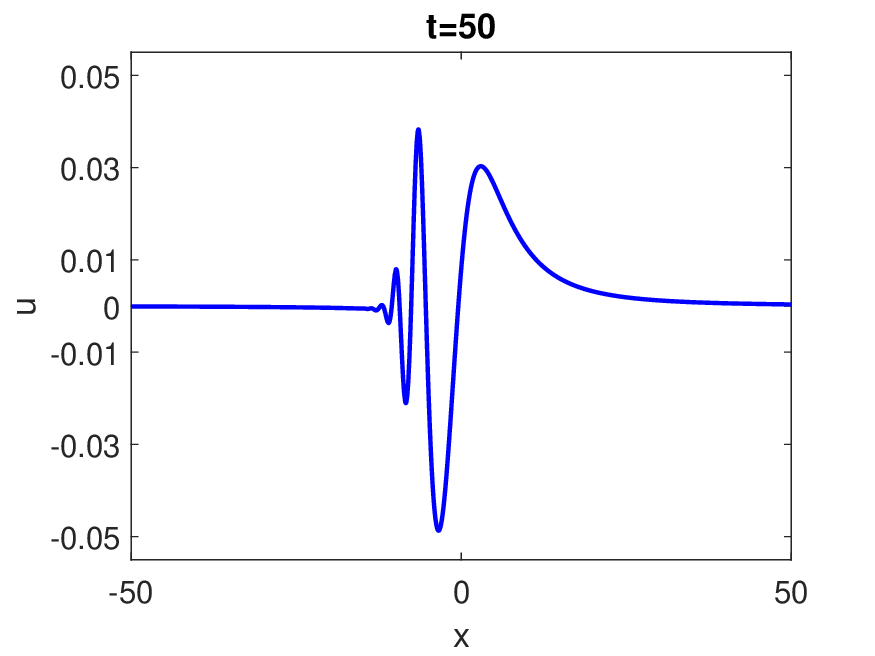}
 \end{minipage}
 \hspace{30pt}
 \begin{minipage}[t]{0.35\linewidth}
   \includegraphics[width=2.5in]{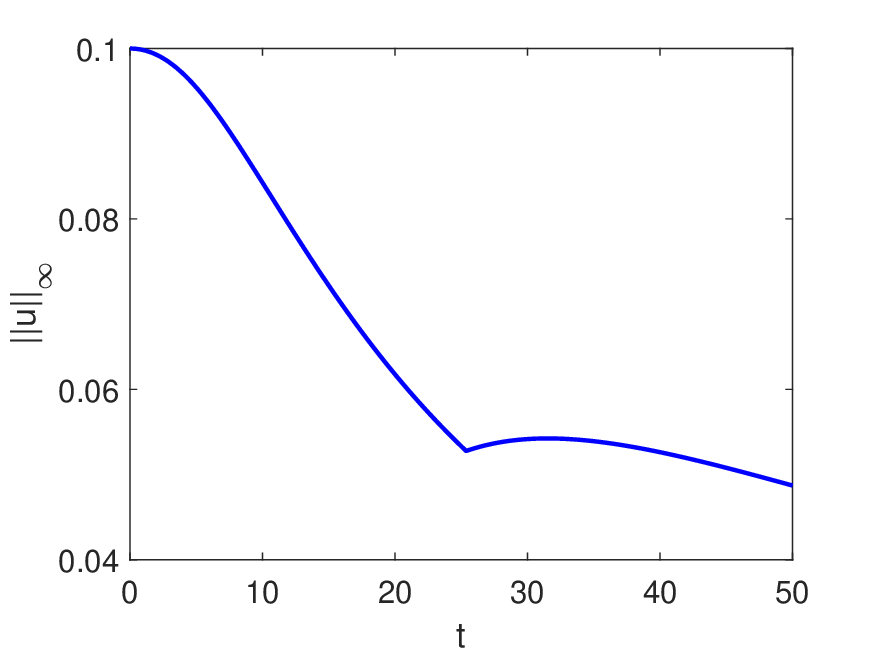}
 \end{minipage}
  \center
  \hspace{-55pt}
 \begin{minipage}[t]{0.35\linewidth}
   \includegraphics[width=2.5in]{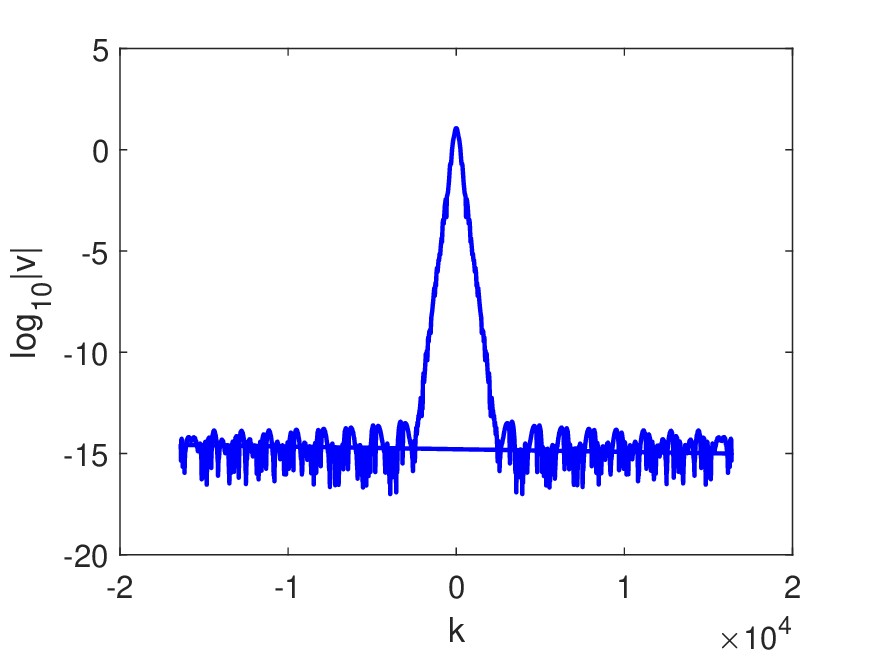}
 \end{minipage}
  \caption{The wave profile of the fractional KdV-BBM equation \eqref{fKdVBBM} for $\alpha=0.2$ and $\delta=0.1$ at time $t=50$  
(upper left panel),  variation in the $L_{\infty}$ norm of the corresponding solution with time (upper right panel) , modulus of the Fourier coefficients for the corresponding solution (bottom panel)}
  \label{a02}
\end{figure}
\end{center}

\indent We now summarize our numerical findings: Let us consider smooth initial data $u_0 \in L^2(\mathbb{R})$ with a single hump.\\\\
\indent (i) When  $\alpha>1/3$, the fractional KdV-BBM equation has smooth solutions for all $t$.\\\\
\indent (ii) When  $\alpha<1/3$, we have two scenarios. If $\delta>1$, then the solutions of the fractional KdV-BBM equation blow-up at finite time. On the other hand, we see that solutions stay smooth for $\delta < 1$.\\\\
Finally, we present our analytical and numerical results in Figure \ref{table}.

\begin{figure}[ht!]
\centering
\begin{tikzpicture}[scale=5.5][baseline=0pt]
 \draw[->] (0,0) -- (2.3,0) node[right]{$\delta$};
 \draw[->] (0,0) -- (0,1.3) node[left]{$\alpha$};
 \draw  (-0.1,0)  node[below]{$0$};
 \draw  (1,0)  node[below]{$1$};
 \draw  (2,0)  node[below]{$2$};
 \draw  (0,0.33)  node[left]{$1/3$};
 \draw  (0,1)  node[left]{$1$};

\path [fill=yellow!25] (0.26,0) rectangle (2.2,1);
\path [fill=red!20!] (0.005,0) rectangle (0.29,1);
\path [fill=lightgray!30] (1,0) rectangle (2.2,0.33);


\draw[dashed] (0,1)--(2.2,1);
\draw[dashed] (0,0.33)--(2.2,0.33);

\draw[dashed] (1,0)--(1,1);


\node [above] at (1.6,0.2) {\small{blow-up}};
\node [below] at (1.6,0.2) {\small{(numerical observation)}};

\node [above] at (1.6,0.65) {\small{smooth solutions}};
\node [below] at (1.6,0.65) {\small{(numerical observation)}};

\node [above] at (0.64,0.2) {\small{smooth solutions}};
\node [below] at (0.64,0.2) {\small{(numerical observation)}};

\node [above] at (0.64,0.65) {\small{smooth solutions}};
\node [below] at (0.64,0.65) {\small{(numerical observation)}};

\node [above] at (0.15,0.005) {\rotatebox{90} {\small{Local existence by Theorem \ref{theorem}}}};

\end{tikzpicture}
\caption{The cases for the existence of solutions to the fractional KdV-BBM equation} \label{table}
\end{figure}
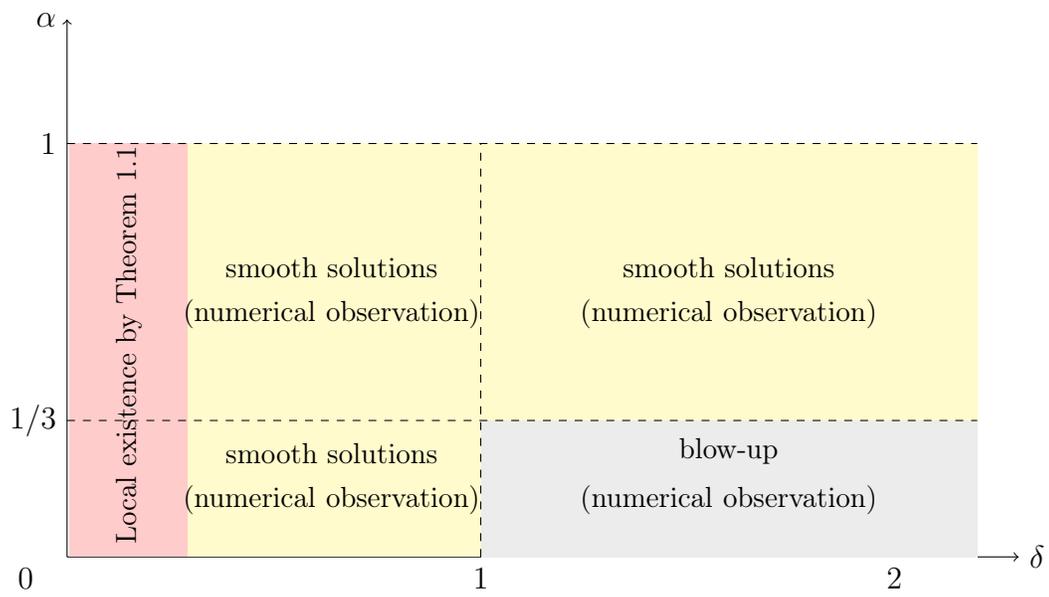

\section{Results and Discussion}
In this paper, the fractional type KdV-BBM equation is derived as a pyhsical model. The long time existence result for the Cauchy problem \eqref{fKdVBBM}-\eqref{u0} is established. The maximal existence time is extended from $\frac{1}{\epsilon}$ to $\frac{1}{\epsilon^2}$ by using a modified energy technique. We use a Fourier pseudospectral method for the same problem. Numerical examples indicate that the solutions exist globally in time for $\alpha>\frac{1}{3}$ (no blow-up) and radiated and decompose into soliton for large time. For $\alpha < \frac{1}{3}$, the solutions with sufficiently large initial data appear to have a blow-up whereas the solutions with small initial data are global. In accordance with our main theorem, it is also numerically observed that the lifespan of solutions can be enhanced, when smaller initial data are considered. Although the equation \eqref{fKdVBBM} has dispersive terms of both fractional KdV and fractional BBM equations, the obtained analytical and numerical results are similar to what has been found for the solutions to the fractional BBM equation in \cite{nilsson} and \cite{klein}.



\clearpage
\noindent \textbf{Acknowledgements}\\
The author was supported by the \emph{Federal Ministry for Economic Affairs and Energy of Germany} (BMWE -- Bundesministerium für Wirtschaft und Energie der Bundesrepublik Deutschland). 
The work reported in this paper was partially developed during the stay of the author at the University of Burgundy, in the academic year 2021/2022. The author wishes to thank Christian Klein for making this visit possible.
The author would also like to thank three anonymous referees for the comments and suggestions that helped to improve the manuscript.
\bibliographystyle{vancouver}
\bibliography{kaynak}
\end{document}